\newtheorem{theorem}{Theorem}[section]
\newtheorem{lemma}[theorem]{Lemma}
\newtheorem{proposition}[theorem]{Proposition}
\newtheorem*{proposition*}{Proposition}
\newtheorem*{theorem*}{Theorem}
\newtheorem*{lemma*}{Lemma}
\theoremstyle{definition}
\theoremstyle{remark}
\newtheorem{remark}[theorem]{Remark}
\numberwithin{equation}{section}
\DeclareMathOperator{\im}{Im}
\DeclareMathOperator{\re}{Re}
\begin{document}

\title{Subprincipal Controlled Quasimodes and Spectral Instability, II}

\author{Pelle Brooke Borgeke}
\address{Linn\oe us university}
\curraddr{}
\email{pelle.borgeke@lnu.se}

\subjclass[2010]{Primary}

\keywords{Subprincipal control, semiclassical quasimodes, pseudospectrum, rescaling of parameters, normal forms, factorable operators, transport equations}

\date {12 January 2026}
\begin{abstract} In this paper, we continue the analysis of the effects of \emph{subprincipal controlled} quasimodes, which are approximate solutions $u(h,b)$ to $P(h)u(h,b)\sim 0,$ depending on the subprincipal symbol $b$. These modes can give spectral instability (pseudospectrum) for the operator $P(x,hD_x;h^n B_{n\geq 1}),$ which has double zeros for the principal symbol. This means that $P(x,\xi)=p=dp=0$ in the neighborhood $\Omega \ni \nu=(x_0,\xi_0).$  In the first paper in this series, we considered operators with \emph{transversal} intersections of bicharacteristics. Now we study operators with \emph{tangential} intersections of bicharacteristics, as well as with double characteristics for $p$. We put the pseudodifferential operator on \emph{normal form} microlocally, and use the model operator $P(h)=hD_1(hD_1+\mathcal{Q}(x,\xi))+B(x,hD_x)$, with $\mathcal{Q}$ as a quadratic form in $(\xi,\underline{\xi}')=(\tau,\underline{\xi})$ to test for quasimodes. We demonstrate two cases where this happens, the $\beta$-condition and the $\partial_{\xi}{\beta}$-condition. We shall also continue with more advanced cases, when the operators are factorable to $P(h)=P_2(h)P_1(h,B)$, thus annihilating the subprincipal control over the quasimodes.
\end{abstract}
\maketitle
\section{Introduction}
This article, the second in a series of two, investigates the existence of semiclassical quasimodes, smooth functions $u(h,b)$ depending on the subprincipal symbol, $b$, such that $P(h)u(h,b)\sim 0$. These modes give spectral instability or pseudospectrum for \begin{equation}P(x,hD_x;h^n B_{n\geq 1}(x,hD_x)) = P(x,hD_x) +hB_1(x,hD_x)+h^2B_2(x,hD_x)+\ldots \end{equation} a pseudodifferential operator on asymptotic form, and here $hD_x=-hi\partial_x, P$ the principal part, $B_1$ subprincipal part, $B_{j\ge 2}$ “higher order terms” in the semiclassical parameter, $0<h \leq 1$. We can microlocaly factorize the principal part in (1.1) to $P(x,hD_x)= P_2(h)P_1(h)$ in the neigborhood $\Omega \ni \nu=(x_0, \xi_0)$ where we have $p_{\nu}=dp_{\nu}=0$ for the principal symbol, $p(x,\xi)$. The important phase space symbols that we study are \begin{align} (\sigma(P(x,hD_x))=p(x,\xi)) \  \land  \ (\sigma(B_1(x,hD_x))=b(x,\xi)=\alpha(x,\xi)+ i \beta(x,\xi)).\end{align}
We shall see that the latter, the subprincipal symbol, will be in control in $\Omega$, because of the double multiplicity for $p$, whether or not we find quasimodes. We use our microlocal \emph{model operator}  $P(h)=hD_1(hD_1+\mathcal{Q}(\xi))+B(x,hD_x), \ B \in \Psi^1,$ where $\mathcal{Q}(\xi)$ is a positive non-degenerate quadratic form and we have coordinates $(\xi_1,\xi_{j>1})=(\tau,\xi))$. We get pseudospectrum \begin{equation} \Vert R(h,0)\Vert= \Vert R(h)\Vert=  \frac{\Vert u(h,b) \Vert}{\Vert P(h)u(h,b) \Vert} \longrightarrow \infty, \ h\rightarrow 0. \end{equation} The resolvent $R(h,z)$ reduces to $R(h,0)=R(h)$ when we study $(P(h)+\zeta)$ as an operator; adding a constant does not change the results by more than a translation. Then $\zeta$ can be ignored (subtracted), we get $P(h)u(h,b)$, and we can thus use methods from the PDE theory, naturally modified for our needs. \footnote {We quote from page 2, in Zworski [12]: \emph{Some techniques developed for pure PDE questions, such as local solvability, have acquired a new life when translated to the semiclassical setting. An example is the study of pseudospectra of non-self-adjoint operators.}} We say that the operator has a $ \beta$-condition if there is a sign change in $ \beta$ or in $ \partial_{\xi}\beta$ along a limit bicharacteristic in $\Omega$. This means that the exponential function in $u(h,b)$ becomes a wave packet quasimode so that we get $(P(h)u(h,b) \sim 0) \land (h \rightarrow 0.)$
In the first article, we explored operators with transversal intersections of bicharacteristics; here, we study the \emph{tangential} type. We find that we locally, by using Taylor´s formula (and here $[\cdot]$ is for \emph{there exists}) \begin{align} [P(h)][\Omega](\sigma (P(h))=\xi_1(\xi_1 - q(x, \xi)), \sigma_{\text{sub}}(P(h))= b(x,\xi)\in S^1,\\ \re\sigma_{\text{sub}}(P(h))= \alpha(x, \xi), \im \sigma_{\text{sub}}(P(h))= \beta(x,\xi)  ).\end{align} We first show two types of $beta-$conditions, which give quasimodes:
1. The $\emph{imaginary part}$ $\beta(x, \xi)$ of the subprincipal \emph{symbol}, $b =\alpha+ i\beta$, must $\emph{change sign}$ on a $\emph{limit bicharacteristic}$, and we call this the $\beta$-condition.
2. The condition shifts so that the $\emph{derivative}$ of the imaginary part, $\partial_{\xi} \beta(x, \xi),$ must change sign and this is the $\partial_{\xi} \beta-$condition.
3. We also consider tangential bicharacteristics of higher order \begin{align}P(h)=hD_1(hD_1 - h^kD_2^k)+h^{j+1}B(x,D_x)D_2^j, k > 2, \\  h^kD_2^k \sim \mathcal{Q}(x, \xi_2^k) \end{align} and $\sigma_{\text{sub}}(P(h))=b(x,\xi) \xi_2^j$ and $ j\geq k$ thus generalizing the results from earlier findings. We examine the limitations of the $\beta$-conditions due to factorization depending on the numbers $j$ and $k$ so that we, in fact, annihilate the $\beta$-condition when $k=j,$ writing $P(h)=P_2(h)P_1(h,B)$. The material is divided as follows: We begin with a brief review of the proof method and the results from the first article. In section 3, we comment on some points in the proof method that were not addressed in the first article. In Section 4, we prove the Theorem with tangential intersection, and in Section 5, we work with the  $\partial_{\xi} \beta-$condition and the Factorization Theorem. In Section 6, we summarize and discuss this series of articles using a tableau that shows the different cases and conditions.
\section{Short review from the first article, and formulas for the tangential case}
We shall first recall some facts from the article [2]. (For the notation we use $(t,x)=(x_1,x_{j>1}) \land (\tau,\xi)=(\xi_1,\xi_{j>1}).)$ There we considered the transversal case and the semiclassical operator $$P(x,hD_x;h^n B_{n\geq 0}(x,hD_x)) = P(x,hD_x) + hB_1(x,hD_x)+h^2B_2(x,hD_x) \ldots$$ with the modell operator $P(h)=h^2D_1D_2+hB(x,hD_x), B \in \Psi^1.$ We worked with principal symbols $p$ that microlocally factor as $ p = p_1 p_2$ in the small neighborhood $\Omega$ we studied.
We recall the definitions we made before, and the sets we now work with are $$ \Sigma_2(P(h)) = { d_{\xi}p(x_0, \xi_0) = 0 }$$ and $$\Sigma_2(z) =  p^{-1}(z) \cap \Sigma_2(P(h)) = {x_0, \xi_0): (p(x_0, \xi_0)= z) \land (d_{\xi}p(x_0, \xi_0) = 0) }.$$
We write, and here $cl$ (classical) means a sum of terms, the subprincipal symbol as \begin{equation} \sigma_{\text{sub}}(P(h))|_{\Omega} = b(x,\xi) \in S_{cl}^1{(\Omega)} \end{equation} and the imaginary part \begin{equation} \im \sigma_{\text{sub}}(P(h))|_{\Omega} = \beta(x,\xi).\end{equation} The $\emph{limit bicharacteristics}$ ($\Gamma_j$) are the possible limits of bicharacteristics at $\Sigma_2(P(h))$ as we commented on in [2], where we proved the theorem \begin{theorem*} Let $P(x,hD_x;h^n B_{n\geq 0}(x,hD_x))$ have a real principal symbol $p(x,\xi)$ that microlocally factorizes $p=p_1p_2$ in the neighborhood $\Omega$ of $(x_0, \xi_0)$. Assume that $p^{-1}(z)$ is a union of two hypersurfaces with transversal involutive intersection at $\Sigma_2(P(h))$ and that $ d^2_{\xi} p(x_0, \xi_0) \not=0$ for $\zeta \in \Sigma (P(h))$. If the imaginary part of the subprincipal symbol $\beta(x,\xi)$ changes sign on a limit bicharacteristic in $\Sigma_2(z) \cap \Omega$, then $\zeta \in \sigma^{\infty}_{scips}(P(h))$, the semiclassical injectivity pseudospectrum of infinite order. \end{theorem*}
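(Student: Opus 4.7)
The plan is to combine a microlocal normal-form reduction with an asymptotic complex WKB (Gaussian beam) construction, in which the sign change of $\beta$ on the limit bicharacteristic is precisely what solves the leading transport equation with a bounded, non-trivial amplitude.

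First, I would exploit the transversal involutive intersection of $\{p_1=0\}$ and $\{p_2=0\}$ together with $d^2_\xi p\neq 0$ to invoke a Darboux-type normal form for two transverse involutive hypersurfaces; quantizing the resulting homogeneous canonical transformation by an elliptic semiclassical Fourier integral operator and conjugating $P(h)$, the principal part reduces to $h^2D_1D_2$, and we recover the model operator $P(h)=h^2D_1D_2+hB(x,hD_x)$ mentioned just before the statement. Since $(P(h)+\zeta)u\sim 0$ is equivalent to $P(h)u\sim 0$ after subtracting the constant, we work with the homogeneous equation. The subprincipal symbol transforms covariantly under FIO conjugation (up to half-density corrections), so the sign change of $\beta$ is preserved on the image of the limit bicharacteristic $\Gamma\subset\Sigma_2(z)\cap\Omega$ that is singled out by the hypothesis.

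Second, I would set up a complex WKB ansatz
\[
u(h,b)=h^{-n/4}a(x,h)\,e^{i\varphi(x)/h},\qquad a\sim\sum_{j\geq 0}h^j a_j(x),
\]
where $\varphi$ solves the eikonal equation $p(x,d\varphi)=0$ in a complex neighborhood of $\Gamma$, chosen so that $\operatorname{Im}\varphi$ vanishes on $\Gamma$ and is positive definite transversally. This Gaussian-beam concentration gives $\|u\|_{L^2}$ bounded below independently of $h$. Substituting into $P(h)u$ and expanding in powers of $h$, the $O(h)$ equation is a first-order transport equation along $\Gamma$ of the schematic form
\[
\partial_t a_0 + \bigl(c(t)+i\beta(\gamma(t))\bigr)\,a_0 = 0,
\]
with $c$ coming from the subprincipal real part $\alpha$ and the half-density contribution. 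The hypothesized sign change of $\beta$ along $\Gamma$ lets me orient the parameter $t$ and select an interval on which $\int\beta\,ds$ stays bounded above, so the explicit exponential solution is a non-zero bounded $a_0$. The higher transport equations for $a_j$ are inhomogeneous linear ODEs of the same structure and are solved recursively along $\Gamma$.

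Finally, I would Borel-sum $\sum_j h^j a_j$ into a genuine amplitude $a(x,h)$, cut off near $\Gamma$, and verify $\|P(h)u\|_{L^2}=O(h^\infty)$ while $\|u\|_{L^2}\gtrsim 1$; by (1.3) the resolvent quotient diverges, placing $\zeta\in\sigma^\infty_{\text{scips}}(P(h))$. The principal obstacle I expect is the delicate interplay between the real drift $c(t)$ from $\alpha$ and the half-density, and the controlled $\beta$-regime: one must show that the imaginary phase built from $\int\beta$ dominates the possible exponential growth from $c$. This is exactly where $d^2_\xi p\neq 0$ enters, since the non-degenerate transverse Hessian keeps the Gaussian beam focused and prevents the spreading that would otherwise dilute the $\beta$-sign effect and defeat the quasimode construction.
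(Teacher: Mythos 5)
Your plan is a genuinely different construction from the paper's, and it has a gap at the decisive point: the phase and the transport equation.

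The paper does not use a complex Gaussian-beam phase solving the eikonal equation. It takes the \emph{linear, real} phase $g(x)=x_2\xi_2$ (for which the eikonal equation for the normal form $p=\xi_1\xi_2$ is satisfied trivially), and then introduces the rescaling parameters $\alpha,\beta,\gamma$ with $\alpha+\beta+\gamma=1$, rescaling $(x_j,\xi_j)\mapsto(h^\gamma x_j,h^{1-\gamma}D_j)$ for $j>1$ and taking the oscillating factor as $e^{ig/h^\alpha}$ and the amplitude as $a(x)=\sum_j h^{j\beta}a_j(x)$. After this rescaling the subprincipal symbol enters the leading transport equation with a prefactor $h^{-\beta}$, so the solution has the form $a_j(t,x)=\phi_j(x)\,e^{-i\int_0^t b_h(s)\,ds/(\xi_2 h^\beta)}$: the concentration factor lives in the \emph{amplitude}, not the phase, and its decay rate is $h$-dependent. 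Correspondingly, the $L^2$ lower bound is $\|v_h\|\gtrsim h^{(n\alpha+\beta)/2}$, not $\gtrsim 1$ as you assert. Your unrescaled ansatz $u=h^{-n/4}a\,e^{i\varphi/h}$ has $b$ entering at $O(h)$ with no $h^{-\beta}$ gain, so the resulting transport coefficient is $O(1)$ and the mechanism by which the sign change of $\beta$ produces a bounded non-trivial amplitude over an $h$-dependent time scale is absent.

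The deeper problem is the Gaussian-beam ansatz itself. Beam constructions propagate along a non-degenerate bicharacteristic, i.e.\ they require $H_p\ne 0$, and the transport equation is a Lie derivative along $H_p$ plus a zeroth-order term. On $\Sigma_2(P(h))$ one has $dp=0$, so $H_p=0$ there, and the ``limit bicharacteristic'' $\Gamma$ is not an integral curve of $H_p$ (the paper emphasizes this: on the double characteristic set the bicharacteristic degenerates to a point, and $\Gamma$ is only a $C^\infty$ limit of nearby bicharacteristics). Writing ``$\partial_t a_0+(c(t)+i\beta)a_0=0$'' presupposes a well-defined propagation vector field along $\Gamma$ coming from the standard $O(h)$ term, but that vector field vanishes exactly where you want to concentrate. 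The paper sidesteps this by factoring the principal symbol and, after the $\alpha$-$\beta$-$\gamma$ rescaling, obtaining a transport operator proportional to $D_1$ (the $H_{p_1}$ direction), with the $h^{-\beta}b_h(t)$ term giving the effective zeroth-order coefficient. In short: the transversal normal form $h^2D_1D_2+hB$ and the statement $(P(h)+\zeta)u\sim0\Leftrightarrow P(h)u\sim0$ you use are fine and match the paper, but the claim that a standard complex WKB / Gaussian beam along $\Gamma$ produces the needed transport equation does not go through at a double characteristic, and you have not supplied the rescaling that is the actual engine of the paper's argument.
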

 The microlocal environment allow us to employ a normal form operator, instead of (1.1) in $\Omega$, where the approximate solutions are supported. We may thus use a simplified claim, via a proposition, to prove the theorem. In the first article, we introduced the parameters $\alpha$, $\beta$, and $\gamma$ in a system of transport equations. These are positive numbers in the interval (0,1) associated with the exponent of the semiclassical variable $h$ to adjust the system of equations in different ways. We change coordinates with $\gamma$, except for the $t$ direction where the change of sign shall occur, to get the subprincipal symbol on the form $b(t,h^{\gamma}x,h^{\beta}\xi_2)\sim b(t,h^{\beta}\xi_2)+\mathcal{O}(h^\gamma)$ (by Taylor)  in $\Sigma_2(z)$. This means that we take the usual association for $j=1$ \begin{equation} S^n(\mathbb{R}^n)\ni(x_j,\xi_j) \mapsto (x_j, hD_j)\in \Psi^n.\end{equation}
We make the change of coordinate in the operator space $\Psi^n$ by multiplying with $h^{\gamma}$ on $(x_j, \xi_j), j>1$ \begin{equation} \Psi^n(\mathbb{R}^n)\ni h^{\gamma}(x_j, hD_j)=(h^{\gamma}x_j, h^{1-\gamma}D_j)\in \Psi^n(\mathbb{R}^n).\end{equation} We used the following WKB-form \begin{equation} v_h(x)=e^{ig/h^{\alpha}}a(x) \end{equation} where $g(x) = x_2 \xi_2 $ and $e^{ig/h^{\alpha}}$ is the oscillating factor.  We have $\beta$ in the asymptotic expansion $a(x) = \sum_{j = 0}^N h^{j \beta} a_j$ so we get \begin{equation}v_h(x)=e^{ig/h^{\alpha}}  \sum_{j = 0}^N a_j h^{j \beta}, \quad a_j \in C^{\infty}.\end{equation}
The reason for taking the phase function $g$ in this way is to get the subprincipal symbol of $P(x,hD_x)$ in the first transport equation in system of equations. For the following we already made the scaling and used that $\beta = 1- \alpha -\gamma$ \begin{multline}e^{-i g/h^{\alpha}} P(x,h^{\gamma}x,D_1,h^{-\gamma}D_j)e^{i g/h^{\alpha}} a = hb(t,h^{\gamma}x,h^{\beta} \xi_2)a \\ + h\sum_j \partial_{\xi_j} p(x, h^{\beta} \xi_2) D_j a  \\ + h^2 \sum_{ij} \partial_{\xi_i}\partial_{\xi_j}p(x,h^{\beta} \xi_2) D_iD_ja +\mathcal{O}(h^3). \end{multline}
In a lemma, we gave a simplified formula for the Duistermaat expansion in the transversal case  \begin{lemma*} \begin{align} e^{-i x_2\xi_2 /h^{1-3\beta}} (P(h)e^{i x_2 \xi_2/h^{1-3\beta}} a )(x) \\ = h b(t, h^{\beta}\xi_2 )a+h^{1+\beta} \xi_2 D_1 a + h^{2-2\beta} D_1D_2 a +\mathcal{O}(h^{(1+2\beta}). \end{align} \end{lemma*}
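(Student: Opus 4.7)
The plan is to specialize the general Duistermaat-style expansion (2.6) to the transversal model operator $P(h)=h^2D_1D_2+hB(x,hD_x)$, whose principal symbol is $p(x,\xi)=\xi_1\xi_2$. The three displayed terms on the right-hand side of the lemma should emerge as, respectively, the subprincipal contribution $hb$, the first-order derivative term $h\,\partial_{\xi}p\cdot D$, and the mixed second-order derivative term $h^2\,\partial_{\xi}^2p\cdot D^2$; everything else is absorbed into the remainder.

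Concretely, first I would compute the derivatives of $p=\xi_1\xi_2$: only $\partial_{\xi_1}p=\xi_2$ and $\partial_{\xi_1}\partial_{\xi_2}p=1$ are nonzero. After applying the scaling (2.5) with $\gamma=2\beta$ (which makes the relation $\beta=1-\alpha-\gamma$ consistent with $\alpha=1-3\beta$) and conjugating by $e^{ix_2\xi_2/h^{1-3\beta}}$, these derivatives are evaluated at the phase point $(\xi_1,\xi_2,\ldots)=(0,h^{\beta}\xi_2,0,\ldots)$. Substituting into (2.6) gives the first-order contribution $h\cdot h^{\beta}\xi_2\cdot D_1a=h^{1+\beta}\xi_2D_1a$, while the mixed second derivative yields $h^2\cdot 1\cdot D_1(h^{-\gamma}D_2)a=h^{2-2\beta}D_1D_2a$, where the extra factor $h^{-\gamma}$ is inherited from the rescaling of $D_j$ for $j>1$. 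The $\partial_{\xi_2}p=\xi_1$ contribution drops out because $\xi_1=0$ along the bicharacteristic we are testing.

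For the subprincipal part $hB$, the same conjugation and scaling identify the leading contribution as $h\,b(t,h^{\gamma}x,h^{\beta}\xi_2)a$. A Taylor expansion in the spatial slot around $x=0$ gives $b(t,h^{\gamma}x,h^{\beta}\xi_2)=b(t,h^{\beta}\xi_2)+\mathcal{O}(h^{\gamma})=b(t,h^{\beta}\xi_2)+\mathcal{O}(h^{2\beta})$, so multiplying by $h$ produces the leading term $hb(t,h^{\beta}\xi_2)a$ with a remainder of order $h^{1+2\beta}$. The remaining error — the $\mathcal{O}(h^3)$ term already appearing in (2.6), higher-order Taylor corrections of $b$, and the standard semiclassical composition remainders — is then absorbed into $\mathcal{O}(h^{1+2\beta})$ provided $\beta$ is small enough that $1+2\beta<3$.

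The main obstacle is not any single calculation but the consistent bookkeeping of three interacting scaling exponents: $\alpha=1-3\beta$ in the oscillating factor, $\gamma=2\beta$ in the coordinate change, and the induced $h^{-\gamma}$ weighting of $D_j$ for $j>1$. Once these are aligned, the rest reduces to reading off the vanishing pattern of the derivatives of $p=\xi_1\xi_2$ at the phase point, which is elementary. The delicate point is verifying that each subleading term is genuinely at order at least $h^{1+2\beta}$, so that the three displayed terms are the true leading contributions at distinct powers of $h$ and do not collapse into one another or get dominated by the remainder.
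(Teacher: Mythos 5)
Your proposal is correct and follows the natural route: specialize the paper's formula~(2.6) to the transversal model $P(h)=h^2D_1D_2+hB$ with $p=\xi_1\xi_2$, deduce $\gamma=2\beta$ from $\beta=1-\alpha-\gamma$ and $\alpha=1-3\beta$, read off the nonvanishing derivatives of $p$ at the phase point $(\xi_1,\xi_2)=(0,h^{\beta}\xi_2)$, keep the inherited $h^{-\gamma}$ on $D_2$ (but not $D_1$), and Taylor-expand $b$ in the $h^{\gamma}x$ slot to pick up $\mathcal{O}(h^{1+2\beta})$; this reproduces all three displayed terms and the stated remainder. The paper itself does not reprove the lemma here --- it is recalled from [2] --- but your bookkeeping is the intended derivation, and a direct check $e^{-ix_2\xi_2/h^{\alpha}}\,h^{2-\gamma}D_1D_2\,e^{ix_2\xi_2/h^{\alpha}}a=h^{2-\gamma-\alpha}\xi_2D_1a+h^{2-\gamma}D_1D_2a$ with $\alpha=1-3\beta,\ \gamma=2\beta$ confirms the middle and right terms exactly. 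One minor imprecision: the bound $1+2\beta<3$ that you invoke is never binding (it holds for every $\beta\in(0,1)$); the constraint that actually controls the remainder is that the first Taylor correction of $b$ in the $\xi_2$-slot, of size $h^{2-\gamma}\partial_{\xi_2}b\,D_2a=h^{2-2\beta}\partial_{\xi_2}b\,D_2a$, must be $\mathcal{O}(h^{1+2\beta})$, which requires $\beta\le 1/4$ --- the same threshold at which the displayed $h^{2-2\beta}D_1D_2a$ term itself ceases to be absorbable.
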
 
We also recall Lemma 2.6 from the first article to be able to transform the ansatz $v_h$ to $u(h)$ with $||u(h)||=1$ \begin{lemma*} We assume in the upper estimate for \begin{equation}v_h(x,b)=e^{ig/h^{\alpha}} \sum_{j = 0}^N a_j(x,b) h^{j \beta}, \quad a_j \in C^{\infty}. \end{equation} that we have \begin{align}(\phi_j)(h)[v_h][a_j][\mathscr{B}(t)]\bigl( ||v_h || \le C,\ a_j(t,x) = \phi_j(t,x)e^{-i\mathscr{B}(t)/h^{\beta}};\\  \mathscr{B}(t)\in C^{\infty},\im \mathscr{B} \le 0, \phi_j(t,x) \in C_0^{\infty}(\mathbb{R} \times \mathbb{R}^n), 0<h \ll 1 \bigr ). \end{align} For the lower estimate, we have \begin{align}[v_h] (\phi_0), [\mathscr{B}(0)] [\beta,c][h](||v_h|| \ge ch^{(n\alpha+\beta)/2}; \\ \phi_0(0,0) \not=0,  \mathscr{B}(0) = 0, \\  \beta, c>0,  0<h \ll 1). \end{align} The estimates are uniform if we have uniform bounds on $\mathscr{B} \ \land \ (j)(\phi_j).$ \end{lemma*}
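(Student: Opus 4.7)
The plan is to handle the upper and lower estimates separately; both rely on the sign condition $\im \mathscr{B} \le 0$ together with the explicit form of the amplitudes $a_j(t,x) = \phi_j(t,x)e^{-i\mathscr{B}(t)/h^{\beta}}$.

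For the upper estimate, the oscillating factor $e^{ig/h^{\alpha}}$ has modulus one, and $|e^{-i\mathscr{B}(t)/h^{\beta}}| = e^{\im \mathscr{B}(t)/h^{\beta}} \le 1$ because $\im \mathscr{B} \le 0$. Thus $|a_j(t,x)| \le |\phi_j(t,x)|$ pointwise, and the triangle inequality immediately yields
$$\|v_h\| \le \sum_{j=0}^{N} h^{j\beta}\|\phi_j\|_{L^2},$$
which is uniformly bounded in $h$ because each $\phi_j$ is compactly supported. Uniformity in the data is automatic since the bound depends only on the $L^2$-norms of the $\phi_j$.

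For the lower estimate I would first isolate the leading contribution by writing $v_h = e^{ig/h^{\alpha}}a_0 + e^{ig/h^{\alpha}}\sum_{j\ge 1} h^{j\beta}a_j$ and invoking the reverse triangle inequality, thereby reducing the problem to a lower bound on $\|a_0\|$. Since $\phi_0$ is continuous and $\phi_0(0,0)\neq 0$, there is a product neighborhood $U\times V$ of the origin on which $|\phi_0|^2\ge c_0^2>0$. Next, because $\mathscr{B}$ is smooth with $\mathscr{B}(0)=0$, Taylor expansion gives $\im \mathscr{B}(t)\ge -Ct^2$ on a small interval around $t=0$; restricting the $t$-integration to $|t|\le rh^{\beta/2}$ with $r$ small then forces $e^{2\im \mathscr{B}(t)/h^{\beta}}\ge e^{-2Cr^2}$, so the $t$-integral contributes a factor of order $h^{\beta/2}$. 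The transverse $x$-integral over $V$ picks up the factor $h^{n\alpha/2}$ coming from the Jacobian of the preparatory rescaling $(x_j,\xi_j)\mapsto (h^{\gamma}x_j,h^{1-\gamma}D_j)$ introduced in (2.4), combined with the relation $\beta = 1-\alpha-\gamma$. Multiplying the two contributions gives $\|a_0\|^2 \ge c^2 h^{n\alpha+\beta}$, and then the corrections $h^{j\beta}\|a_j\|$ for $j\ge 1$ are absorbed for $h$ small.

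The main obstacle I expect is bookkeeping of the scaling exponents: one has to verify that the interplay of the fast phase $g/h^{\alpha}$, the amplitude factor $e^{-i\mathscr{B}/h^{\beta}}$, and the rescaling with $\gamma$ really produces the stated power $h^{(n\alpha+\beta)/2}$, and that the lower-order terms $h^{j\beta}a_j$ do not already carry a compensating small factor that would invalidate the absorption step. Once these exponents are correctly tracked, the uniformity statement is immediate, since all constants in this Laplace-type argument depend only on $C^k$-bounds for $\mathscr{B}$ and on the $\phi_j$.
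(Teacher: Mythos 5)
This lemma is only \emph{recalled} in the paper from the first article [2] (``We also recall Lemma 2.6 from the first article\ldots''); the paper itself gives no proof, so there is no text to compare against line by line. Judging the proposal on its own merits: the upper estimate is correct and essentially forced --- $|e^{ig/h^\alpha}|=1$, $|e^{-i\mathscr{B}(t)/h^\beta}|=e^{\im\mathscr{B}(t)/h^\beta}\le 1$ since $\im\mathscr{B}\le 0$, and the triangle inequality finishes. The lower estimate has the right general shape (a Laplace-type argument localising near $t=0$, using $\mathscr{B}(0)=0$, $\im\mathscr{B}\le 0$ to get the second-order vanishing $\im\mathscr{B}(t)\ge -Ct^2$, then restricting to $|t|\lesssim h^{\beta/2}$ to collect the factor $h^{\beta/2}$), and the absorption of the $j\ge 1$ corrections is fine because each $a_j$ carries the same exponential factor, so the tail is suppressed by $h^{j\beta}$ relative to the leading term.

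The genuine gap is in the transverse factor $h^{n\alpha/2}$. You attribute it to the Jacobian of the rescaling $(x_j,\xi_j)\mapsto(h^\gamma x_j,h^{1-\gamma}D_j)$, but that Jacobian, applied to $n$ transverse variables, contributes $h^{n\gamma}$ to the volume element and hence $h^{n\gamma/2}$ to the $L^2$ norm, not $h^{n\alpha/2}$. The invoked identity $\beta=1-\alpha-\gamma$ relates the three exponents but does not make $\gamma=\alpha$; indeed in the transversal case of [2] one has $\gamma=2\beta$ and $\alpha=1-3\beta$, which are generically unequal. So the claimed mechanism does not produce the stated power. To close the gap you would need to trace where the factor $h^\alpha$ actually enters the transverse integral --- plausibly the compactly supported $\phi_j$ are built in variables that are themselves scaled by $h^\alpha$ relative to the physical $x$, or the phase $g/h^\alpha = x_2\xi_2/h^\alpha$ is combined with a stationary-phase localisation in $x$ of width $h^{\alpha/2}$ rather than with the $\gamma$-scaling. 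As written, the proposal contains exactly the exponent bookkeeping error you yourself flagged as the main risk, and it would have to be repaired before the argument is complete.
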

After this resume, we now turn to the theme of this article, where we also use an adapted form of the expansion in the tangential case. Now we must factor out $h^{1+2\beta}$, so we use the result from Lemma 2.5 in [2]
\begin{equation}\left\{  
\begin{array}{ll}
1.\ \alpha = 1- (j_{=1|2}+2)\beta  \\
2.\ \beta =  \beta \\
3.\ \gamma =  (j_{=1|2}+1)\beta \\
4.\ \alpha + \beta + \gamma \\
=1- (j_{=1|2}+2)\beta+\beta+(j_{=1|2}+1)\beta=1
\end{array}\right.
\end{equation}
and take $\gamma=3\beta$, and $\alpha=1-4\beta$. We get the following expression after expansion, using Taylor's formula for $b$, factoring out, choosing the value of the parameter $\gamma$, and then converting to $\beta$
\begin{align} h^{1+2\beta}(h^{2(1-\beta)}D_1(h^{-3\beta} D_2+h^{\beta-1}\xi_2)^2a  \\ +h^{-2\beta}b_h(t)a +h^{1-2\beta}D_1^2a + \mathcal{O}(h^{\beta}) +(\mathcal{O}(h^{2-(3\mu-2)\beta})).\end{align} The last remainder term comes from the expansion, and we use it for control and estimates of the system of equations.
Finally, we turn to a lemma for the reduction in this case, which is done by using Taylor’s formula twice. \begin{lemma} With the assumptions about tangential intersections of bicharacteristics as in the Theorem 3.1 (below), the principal symbol can be reduced to the normal form with $p(x, \xi)|_{\Omega} =  \tau(\tau- q(x, \xi))$, where $q(x, \xi))$ is a non-degenerate quadratic form in $\xi' $, in a small neighborhood $\Omega \ni (x_0, \xi_0) \in \Sigma_2(P(h))$. \end{lemma}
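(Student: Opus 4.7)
The plan is to produce the normal form by a two-step reduction of the microlocal factorization $p = p_1 p_2$, using a homogeneous canonical transformation and two applications of Taylor's formula, as flagged in the statement. The tangential intersection hypothesis says that $\{p_1 = 0\}$ and $\{p_2 = 0\}$ share a common tangent space at $\nu = (x_0, \xi_0)$, i.e.\ $dp_1(\nu)$ and $dp_2(\nu)$ are proportional. Since $dp_1(\nu) \neq 0$, Darboux's theorem provides a homogeneous canonical transformation $\chi$ that sends $p_1$ to $\tau = \xi_1$ in a conic neighborhood of $\nu$. After rescaling $p_2$ by an elliptic factor, we may further assume $dp_2(\nu) = d\tau$.

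The first Taylor application is an implicit-function / Malgrange preparation argument. Because $\partial_\tau p_2(\nu) = 1$, we can factor
\begin{equation}
p_2(x, \tau, \xi') = e(x, \tau, \xi')\bigl(\tau - r(x, \xi')\bigr),
\end{equation}
with $e$ elliptic at $\nu$ and $r$ independent of $\tau$; absorbing $e$ into the ambient elliptic prefactor yields $p \sim \tau(\tau - r(x, \xi'))$. The tangential condition, pushed forward by $\chi$, forces $r(\nu') = 0$ and $dr(\nu') = 0$ at $\nu' = (x_0, \xi_0')$. The second Taylor application is then the expansion of $r$ to second order at $\nu'$,
\begin{equation}
r(x, \xi') = q(x, \xi') + \rho(x, \xi'),
\end{equation}
where $q$ is the quadratic part and $\rho$ is a cubic remainder. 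The nondegeneracy of $q$ as a quadratic form in $\xi'$ is the direct translation of the Hessian hypothesis $d^2_\xi p(\nu) \neq 0$ carried over from the analogue of Theorem 3.1, since the $\xi'$-Hessian of $\tau(\tau - r)$ at $\nu$ reduces to $-\tau \, d^2_{\xi'} r$, and the surviving quadratic contribution is exactly $d^2_{\xi'} q$.

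The step I expect to be the main obstacle is the elimination of the cubic remainder $\rho$, since Taylor's formula by itself only delivers a second-order approximation. I would address this in one of two ways. The first is a Moser-type deformation, interpolating between $\tau(\tau - q)$ and $\tau(\tau - q - \rho)$ along a one-parameter family and solving the resulting homological equation by a further homogeneous canonical transformation tangent to the identity at $\nu$. The second, more in line with the microlocal bookkeeping of Section 2, is to observe that $\rho$ contributes at a strictly lower order than the principal level controlled by the transport equations, so it can be absorbed into the subprincipal symbol $b(x, \xi)$ without affecting the sign-change analysis of $\beta$. Either route produces the asserted normal form $p(x, \xi)|_{\Omega} = \tau(\tau - q(x, \xi'))$ with $q$ a nondegenerate quadratic form in $\xi'$ on a small conic neighborhood $\Omega$ of $\nu$.
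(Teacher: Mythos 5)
Your first half runs parallel to the paper's argument: a symplectic change of coordinates straightens $S_1$ to $\{\xi_1=0\}$, and then a factorization (your Malgrange step, the paper's double Taylor) writes the second factor as an elliptic function times $\xi_1 - q(x,\xi)$, with the tangential hypothesis forcing $q = dq = 0$ and $d^2q \neq 0$ at $(x_0,\xi_0)$, and the elliptic prefactor is divided out after quantization. The paper also uses the involutivity of $\Sigma_2(P(h))$ to complete $\xi_1$ to symplectic coordinates with $\Sigma_2 = \{\xi_1 = \cdots = \xi_k = 0\}$, which you leave implicit; that step matters because it pins down in which variables $q$ must vanish to second order.

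Where you diverge is in reading the phrase ``non-degenerate quadratic form in $\xi'$'' literally and then trying to kill the cubic Taylor remainder $\rho$. The paper never does this. Its $q$ is simply the defining function of $S_2$ satisfying $q(x_0,\xi_0)=0$, $dq(x_0,\xi_0)=0$, $d^2q(x_0,\xi_0)\neq 0$; and Proposition 3.3 later writes it as $\sum_{jk}q_{jk}(t,x,\xi)\xi_j\xi_k$, i.e.\ a quadratic form in $\xi$ with smooth variable coefficients, which is exactly Taylor's formula with integral remainder applied to a function vanishing to second order. That representation already contains the cubic-and-higher terms inside the $q_{jk}$; there is no remainder left over to eliminate, so the whole ``main obstacle'' you set up does not arise. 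Your two proposed fixes for it are also problematic on their own terms: the suggestion to ``absorb $\rho$ into the subprincipal symbol $b$'' does not work order-by-order in $h$, since $\rho$ is an $h$-independent contribution to the principal symbol, not an $O(h)$ term (what actually happens is that after the $h^\beta$-rescaling the higher Taylor terms land in the $\mathcal{O}(h^\beta)$ remainder of the transport system, which is a different statement); and the Moser deformation would require solving a homological equation whose Hamilton field $H_{\tau(\tau-q)}$ degenerates precisely on $\Sigma_2$, which you do not address. So the proposal identifies a phantom difficulty and offers incomplete repairs, while missing the simpler reading that makes the paper's proof close.
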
 \begin{proof} First we subtract $\zeta$. In $\Omega \ni (x_0, \xi_0)$ we write $p^{-1}(z) = S_1 \cup S_2$ where $S_{j=1,2}$ are smooth hyper surfaces. If we make a symplectic change of coordinates, we may assume $S_1 = {\xi_1 = 0 }$ and then we get $S_2 = {\xi_1 = q(x,\xi)}$ where $q(x_0,\xi_0)= |dq(x_0,\xi_0)| = 0$ and $d^2q(x_0,\xi_0) \not=0$. Since $\Sigma_2(P(h))$ is an involutive manifold by assumption, we may complete $\xi_1$ to a symplectic coordinate system so that $\Sigma_2(P(h)) = { \xi_1 = … = \xi_k = 0 }.$
If we use Taylor’s formula for $p$, we find that $p = a\xi_1$, where $a=0$ on $S_2$. 
Another application of Taylor (after a change of coordinates) gives $a(x, \xi) = q_0(x, \xi)(\xi_1- q(x, \xi))$ and $$p(x, \xi) = q_0(x, \xi) \xi_1(\xi_1-q(x, \xi)).$$
We must have $q_0(x, \xi) \not=0$, else $d^2p(x_0, \xi_0) = 0$. By assumption $q(x, \xi) = dq(x, \xi)=0$ on $\Sigma_2(P(h))$ and $d^2q(x, \xi) \not= 0$ at $(x_0, \xi_0).$ Now we use the semiclassical quantization, canonical transformations, or FIOs \begin{equation}P(h) = q_0(x,hD)hD_1(hD_1 - q(x, hD_x)) + hB(x,hD_x) \end{equation} in the small neighborhood $\Omega$ of $(x_0, \xi_0)$. If we multiply with $q_0^{-1}(x,hD)$, we can take the elliptic $q_0=1$.  For lower-order terms, we may assume $B(x,hD_x) \in \Psi^1$ as before. \end{proof} \begin{remark} Note that we are localizing in the neighborhood $\Omega$ of $(x_0, \xi_0)$, the contributions from outside are thus $(N) (\mathcal{O}(h^N), N\in \mathbb{N})$, by the calculus. As before the approximate solution $u(h,b) \in L^2_{\Omega}$, is supported in the neighborhood where the normal form $p(x, \xi)|_{\Omega}=\xi_1(\xi_1 - q(x, \xi))$ exists. The geometry in this case is that the surfaces are tangent of exactly second order at $(x_0, \xi_0)$. We find that $H_{\xi_1} = H_{\xi_1 - q}$ on $\Sigma_2(0),$ so the set is foliated by lines in the $x_1$ direction, different from the transversal case where $\Sigma_2(0)$ is foliated by leaves. \end{remark} \begin{remark} More often, the Malgrange preparation theorem is used for reduction, and our proof resembles this theorem by using $q(x, \xi) = dq(x, \xi)=0$ on $\Sigma_2(P(h)$, but the technique here is more straightforward. In the first article, we referred to [1] for the reduction in the transversal case. It was done there for $h=1$, but this is not a restriction, because it is always possible to rescale to this case; see [12]. \end{remark}
\section{Operators with tangential intersection of bicharacteristics, $\beta$-condition} We now study the sign change in $t \mapsto \beta(t)$, where $\beta=\im \sigma_{\text{sub}}(P(h))$, as before, is the imaginary part of subprincipal symbol at $\Sigma_2(z)$ and we construct the quasimodes in a similar way as in the transversal case. \begin{theorem} Consider the semiclassical pseudodifferential operator with asymptotic expansion as $P(x,hD_x;h^n B_{n\geq 1})$.  Let $\zeta \in \Sigma (P(h))$ and assume there is $(x_0,\xi_0) \in p^{-1}(\zeta)$ such that $(x_0,\xi_0) \in \Sigma_2(\zeta), d^2p(x_0, \xi_0) \not=0$ and that $p^{-1}(\zeta)$ is a union of two hypersurfaces with tangential involutive intersection of precisely second order at $\Sigma_2(P(h))$, which is a manifold in a neighborhood $\Omega$ of $(x_0, \xi_0)$. If the imaginary part of the subprincipal symbol, $\im \sigma_{\text{sub}}(P(h))= \beta(x, \xi)$, changes sign on a limit bicharacteristic in $\Sigma_2(\zeta) \cap \Omega$, then $\zeta \in \sigma^{\infty}_{scips}(P(h))$, the injectivity pseudospectrum of infinite order. \end{theorem}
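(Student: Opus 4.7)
The plan is to reduce the problem to the model operator via Lemma 2.1, then build a WKB quasimode whose leading transport equation is driven by the subprincipal symbol $b$, so that the sign change of $\beta$ along a limit bicharacteristic forces an $L^2$-admissible phase and yields the bounds of Lemma 2.6.

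First I would subtract $\zeta$ (which by the discussion after (1.3) only translates the spectrum), localize microlocally in the neighborhood $\Omega$, and apply Lemma 2.1 to replace $P(h)$ by the normal form $hD_1(hD_1 - q(x,hD_x)) + hB(x,hD_x)$, where $q$ is a non-degenerate quadratic form in $\xi'$ and $B \in \Psi^1$ has $\sigma_{\text{sub}}(B) = b = \alpha + i\beta$. Contributions from outside $\Omega$ are $\mathcal{O}(h^N)$ for every $N$ by the semiclassical calculus, so it suffices to produce an $L^2$-normalized $u(h,b) \in L^2_\Omega$ with $P(h)u(h,b) = \mathcal{O}(h^N)$, which by (1.3) gives $\zeta \in \sigma^\infty_{scips}(P(h))$.

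Next I would take the WKB ansatz (2.6) together with the coordinate rescaling (2.4) and the parameter relations (2.14), specializing to $\gamma = 3\beta$ and $\alpha = 1-4\beta$. Conjugation of $P(h)$ by $e^{ig/h^\alpha}$ with $g$ adapted to the limit bicharacteristic, followed by Taylor expansion of $p$ and $b$ as in the review, produces the expansion (2.15)--(2.16); after extracting the common factor $h^{1+2\beta}$, the vanishing of the successive powers of $h^\beta$ yields a system of transport equations. The first one is an ordinary differential equation in $t$ driven by $b_h(t)$; using the sign change of $\beta$ along the limit bicharacteristic in $\Sigma_2(\zeta) \cap \Omega$, I would choose a smooth imaginary phase $\mathscr{B}(t)$ with $\im \mathscr{B} \leq 0$ and $\mathscr{B}(0)=0$, obtained by integrating $b_h$ against a cutoff, so that the ansatz concentrates as a wave packet at $t=0$. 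The leading amplitude $a_0(t,x)=\phi_0(t,x)e^{-i\mathscr{B}(t)/h^\beta}$ has $\phi_0(0,0)\neq 0$ and compact support, and subsequent $a_j$ are constructed inductively as smooth, compactly supported solutions of inhomogeneous transport equations. Lemma 2.6 then gives the upper bound $\|v_h\| \leq C$ and the lower bound $\|v_h\| \geq c h^{(n\alpha+\beta)/2}$, so $u(h,b) = v_h/\|v_h\|$ satisfies $\|P(h)u(h,b)\| = \mathcal{O}(h^N)$ for every $N$.

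The main obstacle, and the genuine difference from the transversal case of [2], is that the leading conjugated operator contains the quadratic factor $(h^{-3\beta}D_2 + h^{\beta-1}\xi_2)^2$ together with a second-order term $h^{1-2\beta}D_1^2$, rather than the single transversal factor $h^{1+\beta}\xi_2 D_1 + h^{2-2\beta}D_1D_2$ of the transversal lemma. This forces the amplitude to carry a Gaussian profile in the $\xi'$-directions whose width must be matched to the scaling $h^{3\beta}$ so that the remainders $\mathcal{O}(h^\beta)$ and $\mathcal{O}(h^{2-(3\mu-2)\beta})$ in (2.16) remain genuinely subleading and do not feed back into the first transport equation. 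Once this matching is verified, the construction closes inductively and the theorem follows.
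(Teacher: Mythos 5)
Your overall route matches the paper's: subtract $\zeta$, apply Lemma 2.1 to pass to the normal form $hD_1(hD_1-q(x,hD_x))+hB$, take the WKB ansatz $v_h=e^{ig/h^{\alpha}}\sum a_jh^{j\beta}$ with $g=x_2\xi_2$ and the parameter choice $\gamma=3\beta$, $\alpha=1-4\beta$, extract $h^{1+2\beta}$, read off the first transport equation $\xi_2^2D_1a+h^{-2\beta}b_h(t)a=0$, solve it with $a=e^{-i\int_0^t b_h\,dt/\xi_2^2h^{2\beta}}$, use the sign change of $\beta$ to get $\im\mathscr{B}\le 0$, and close with Lemma 2.6. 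The paper does this in two stages (Proposition 3.2 for the model $q=-\xi_2^2$, then Proposition 3.3 substituting the general quadratic form), which you elide but which adds nothing conceptually new.

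The one place you drift from what the argument actually requires is the claim that the quadratic factor $(h^{-3\beta}D_2+h^{\beta-1}\xi_2)^2$ ``forces the amplitude to carry a Gaussian profile in the $\xi'$-directions whose width must be matched to the scaling $h^{3\beta}$.'' No such matching occurs. After expanding the square, the leading term $\xi_2^2D_1a$ sits at order $\mathcal{O}(1)$, and the cross and higher terms $2h^{1-4\beta}\xi_2D_1D_2a$, $h^{2(1-4\beta)}D_1D_2^2a$, $h^{1-2\beta}D_1^2a$ are controlled purely by power counting: substituting the transport solution they become $\mathcal{O}(h^{1-6\beta})$, $\mathcal{O}(h^{2(1-5\beta)})$, $\mathcal{O}(h^{1-6\beta})$, which are subleading once $\beta\in(0,1)$ is taken small enough, and the general remainder formula $(h^{1+2\beta})\mathcal{O}(h^{\lambda+\mu-1+(\kappa-3\mu-2)\beta})$ confirms this. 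The amplitudes $a_j(t,x)=\phi_j(x)e^{-i\int b_h\,dt/\xi_2^2h^{2\beta}}$ are just compactly supported cutoffs times the damped exponential in $t$; the only Gaussian-type decay present is in $t$ from $\im\mathscr{B}\le0$, not in the $\xi'$ or $x'$ directions. Your ``main obstacle'' is therefore a misdiagnosis, though a harmless one since your stated endgame (inductive construction of $a_j$ and Lemma 2.6) is still what the proof does.
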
 The theorem is proved inderectly with the proposition to follow.  
We start by taking $q(x, \xi)=-\xi_2^2$ to make the calculation easier. We will, in Proposition 3.3, substitute the quadratic form. \begin{proposition} Let the principal symbol $p$ be factored $p=p_1p_2$ in the neighborhood $\Omega$. If $\im \sigma_{\text{sub}}(P(h))\arrowvert_{\Sigma_2(P(h))} = \beta(x,\xi)$, changes sign on a limit bicharacteristic in $\Omega$, then \begin{equation} (N)[u(h)][h](||P(h) u(h)|| \le  C_N h^{N}, N \in \mathbb{N}, ||u(h)||_{L^2} =1, 0<h \ll 1). \end{equation} We have the following solution formula for the transport equations, the quasimodes \begin{equation} a = e^{-i \int_0^t b_h(t)dt/\xi_2^2  h^{2\beta}} \end{equation} where $ D_1 i\int_0^t b_h(t)dt/\xi_2  = b_h(t)/ \xi_2^2$. \end{proposition}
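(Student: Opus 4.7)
The plan is to construct a WKB ansatz adapted to the reduced operator of Lemma 2.1 and to show that the sign change of $\beta$ along the limit bicharacteristic forces a wave packet quasimode. I first fix the scaling $\gamma=3\beta$, $\alpha=1-4\beta$ prescribed by the system preceding (2.16), and substitute
\[ v_h(x)=e^{ig/h^{\alpha}}\sum_{j=0}^{N}a_j(x)h^{j\beta},\qquad g(x)=x_2\xi_2, \]
into the model operator $P(h)=hD_1(hD_1-h^2D_2^2)+hB(x,hD_x)$ given by $q=-\xi_2^{2}$. Applying the expansion (2.16)--(2.17) and factoring out $h^{1+2\beta}$ organizes the output as a hierarchy of transport equations indexed by powers of $h^{\beta}$; contributions from outside $\Omega$ are $\mathcal{O}(h^{N})$ by the pseudodifferential calculus, as noted in the remark after Lemma 2.1.

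The leading transport equation comes from matching the $h^{-2\beta}b_h(t)$ term against the cross-term in $(h^{-3\beta}D_2+h^{\beta-1}\xi_2)^2$ that multiplies the $h^{2(1-\beta)}D_1$ factor. After cancellation it reduces to a first-order ODE in $t$ of the form $\xi_2^{\,2}\,D_1 a_0=-b_h(t)\,a_0$, whose integration yields precisely the proposed phase,
\[ a_0=\phi_0(t,x)\,e^{-i\mathscr{B}(t)/h^{2\beta}},\qquad \mathscr{B}(t)=\int_0^t b_h(s)\,ds\big/\xi_2^{\,2}, \]
with $\phi_0$ a smooth cutoff satisfying $\phi_0(0,0)\neq 0$. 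The \emph{essential use of the hypothesis} appears here: since $\beta=\im b$ changes sign at $t=0$ along the limit bicharacteristic, we may assume after reflecting $t\mapsto -t$ that $\beta\ge 0$ on the relevant side, so that $\im \mathscr{B}(t)\le 0$ in a neighborhood of $0$. This is exactly the structural condition $\im\mathscr{B}\le 0$ demanded by the upper-estimate hypothesis of the recalled Lemma 2.6 from [2].

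I next solve the higher-order transport equations iteratively. Because $H_{\xi_1}=H_{\xi_1-q}$ on $\Sigma_2(0)$ (as observed in the remark), each corrector equation for $a_j=\phi_j(t,x)e^{-i\mathscr{B}(t)/h^{2\beta}}$ reduces to a linear inhomogeneous ODE along $\partial_t$, with forcing built from $a_0,\dots,a_{j-1}$ and from the remainders $\mathcal{O}(h^{\beta})+\mathcal{O}(h^{2-(3\mu-2)\beta})$ in (2.17); the $\phi_j$ may be taken smooth and compactly supported in $\Omega$. Truncating the asymptotic sum at a suitable $N(h)$ (Borel-type resummation) then gives $\|P(h)v_h\|\le C_N h^{N}$ for every $N$. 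Finally, Lemma 2.6 from [2] furnishes the lower bound $\|v_h\|\ge c h^{(n\alpha+\beta)/2}$, and the normalization $u(h)=v_h/\|v_h\|$ delivers the stated quasimode estimate.

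The main obstacle will be the bookkeeping of the $h$-powers produced by the tangential expansion (2.17). In contrast to the transversal case of [2], the expansion now contains the extra term $h^{1-2\beta}D_1^{\,2}a$, which injects a new second-order forcing into every corrector equation; I must check that with the scaling $\alpha=1-4\beta$, $\gamma=3\beta$ this remainder stays strictly subleading and can be absorbed by the $\phi_j$ without perturbing the leading phase $\mathscr{B}$. The apparent singularity $1/\xi_2^{\,2}$ in $\mathscr{B}$ is harmless because the nondegeneracy $d^{2}q(x_0,\xi_0)\neq 0$ permits microlocalization bounded away from $\xi_2=0$ along the selected limit bicharacteristic, which is also what will allow the passage from $q=-\xi_2^{\,2}$ to a general nondegenerate quadratic form $\mathcal{Q}$ in Proposition 3.3 of the statement.
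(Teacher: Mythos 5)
Your proposal follows essentially the same route as the paper: take $g(x)=x_2\xi_2$, apply the $\gamma=3\beta$, $\alpha=1-4\beta$ rescaling, factor out $h^{1+2\beta}$, read off the first transport equation $\xi_2^2 D_1 a + h^{-2\beta}b_h(t)a=0$, integrate to $a_0=\phi_0\,e^{-i\mathscr{B}/h^{2\beta}}$ with $\mathscr{B}(t)=\int_0^t b_h/\xi_2^2$, use the sign change to arrange $\operatorname{Im}\mathscr{B}\le 0$, iterate the corrector equations, and close with Lemma 2.6 from [2]. The paper formulates the ``$\operatorname{Im}\mathscr{B}\le 0$'' step slightly more carefully (it argues $\int\beta$ attains a maximum inside the interval and integrates from there); your ``reflect $t\mapsto -t$'' version is in the same spirit.

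There is one concrete slip. You write that the leading transport equation comes from matching $h^{-2\beta}b_h(t)$ against ``the cross-term in $(h^{-3\beta}D_2+h^{\beta-1}\xi_2)^2$.'' That is the wrong term. Expanding the square after the prefactor $h^{2(1-\beta)}D_1$ gives three contributions: $h^{2-8\beta}D_1D_2^2$, the cross-term $2h^{1-4\beta}\xi_2 D_1 D_2$, and the pure $(h^{\beta-1}\xi_2)^2$ piece, which is $h^{2(1-\beta)}h^{2(\beta-1)}\xi_2^2 D_1=\xi_2^2 D_1$ at order $h^0$. It is this last, $\mathcal{O}(1)$ term that balances $h^{-2\beta}b_h(t)a$ once $D_1$ hits the phase $e^{-i\mathscr{B}/h^{2\beta}}$ and produces the compensating $h^{-2\beta}$. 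The cross-term $2h^{1-4\beta}\xi_2 D_1D_2 a$ is strictly subleading: with $a_0$ as above it is $\mathcal{O}(h^{1-6\beta})$, exactly as the paper checks in its bookkeeping of the remainder. Your final transport equation $\xi_2^2 D_1 a_0=-b_h(t)a_0$ and the ensuing phase are correct, so this is an error in the justification rather than in the conclusion, but it should be fixed: the whole point of the chosen scaling $\alpha=1-4\beta$, $\gamma=3\beta$ is precisely to make the $(h^{\beta-1}\xi_2)^2$ piece land at order $h^0$ while pushing the cross-term and the $D_2^2$ piece into the $\mathcal{O}(h^\beta)$ remainder.

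A minor stylistic remark: your justification that $1/\xi_2^2$ is harmless (``microlocalization bounded away from $\xi_2=0$'') is not quite how the scaling works here. The construction already places the rescaled $\xi_2$ at a fixed nonzero value while the \emph{unscaled} frequency $h^{1-\alpha}\xi_2\to 0$ tends to $\Sigma_2$ as $h\to 0$; the $\xi_2$ entering $\mathscr{B}$ is therefore a fixed constant by design, not a quantity one needs to keep away from $\Sigma_2$ by hand. The conclusion (no singularity) is right.
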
 \begin{proof}We take $g(x) = x_2 \xi_2$ as before (again the symbol is on a form so that the eikonal equation disappears). If we include expansion, scaling, converting to the parameter $\beta$, Taylor in the $x$ coordinates for the subprincipals symbol $b$, and factorization, we get the following expression \begin{align} h^{1+2\beta}(h^{2(1-\beta)}D_1(h^{-3\beta} D_2+h^{\beta-1}\xi_2)^2a  \\ +h^{-2 \beta}b_h(t)a +h^{1-2\beta}D_1^2a +\mathcal{O}(h^{\beta}). \end{align} When we expand the square, we note the symmetry, inspect the expression \begin{align} h^{2(1-\beta)}D_1(h^{-3\beta} D_2+h^{\beta-1}\xi_2)^2 a \\ = D_1h^{2(1-\beta)+(\beta-1)2}\xi_2^2 a + 2h^{ 1-4\beta} \xi_2D_1D_2a+h^{2(1-4\beta)} D_1D_2^2a\\ +h^{-2 \beta}b_h(t)a +h^{1-2\beta}D_1^2a +\mathcal{O} (h^{\beta}), \end{align} and decide the first transport equation \begin{equation} D_1\xi_2^2 a +h^{-2 \beta}b_h(t)a=0. \end{equation} We now have \begin{align} [\alpha, \beta, \gamma](\alpha + \beta + \gamma = 1; \ \alpha=1-4\beta, \gamma=3\beta, \beta=\beta; \\ \alpha, \beta, \gamma \in \mathbb{R}(0,1)). \end{align} with $\mathcal{O}_{\mathcal{T}}$ as the remainder from Taylor \begin{align} h^{1+2\beta} (h^{-2 \beta}b_h(t)a+ \xi_2^2D_1a + h^{1-2\beta}D_1^2a + 2h^{1-4 \beta} \xi_2 D_1D_2a+h^{2(1-4\beta)}\\ D_1D_2^2a + \mathcal{O}_{\mathcal{T}}(h^{\beta})). \end{align} For the terms above, we get with $D_1 a= -h^{-2 \beta}b_h(t)a/\xi_2^2$ \begin{equation} h^{1-2\beta}  D_1^2a = h^{1-2\beta}  (-h^{-2 \beta}b_h(t)(-h^{-2 \beta}b_h(t))a/ \xi_2^2)=\mathcal{O}(h^{1-6\beta}) \end{equation} \begin{equation} 2h^{1- 4\beta}  \xi_2D_1D_2a = h^{1-4\beta} \xi_2 D_2(h^{-2\beta}b_h(x_1)a/ \xi_2^2)=\mathcal{O}(h^{1-6\beta}) \end{equation} \begin{equation} h^{2(1-4\beta)}  D_1D_2^2a = h^{2(1-4\beta)}  D_2^2(-h^{-2 \beta}b_h(x_1) a/\xi_2^2 ) = \mathcal{O}(h^{2(1-5\beta}). \end{equation}
For the remainder term for our two classes of operators, transversal and tangential intersections of bicharacteristics, we have $j=1$ for the first case and $j=2$ for the second case with the notation $h^{\kappa\beta}\xi_2^\kappa h^{\lambda}D_1^{\lambda}h^{\mu(1-\gamma)}D_j^{\mu},$ \begin{align}(h^{1+j_{1|2}\beta})\mathcal{O}(h^{\lambda+ \mu -1 + (\kappa- (j_{1|2}+1)\mu -j_{1|2})\beta }), \end{align} so here we get \begin{align} (h^{1+2\beta})\mathcal{O}(h^{\lambda+ \mu -1 + (\kappa- 3\mu-2)\beta}). \end{align} If we check for $\xi_2D_1D_2a$, we get $$\mathcal{O}(h^{2-1 +(1-3-2)\beta})=\mathcal{O}(h^{1-4\beta}).$$ And for $D_1D_2^2a$ we get $\mathcal{O}(h^{2-8\beta})$. Also, the other two equations are correct, checking with the general ordo-term, and we see that (as in the transversal case) $$\xi_2^2D_1a = \mathcal{O}(1)$$ and we solve in units of $\beta$, and we obtain the following terms of sizes $h^{\beta}, h^{3\beta}$, and $h^{1-4 \beta}$. As in the transversal case, we get \begin{align}a(t)=e^{-i \int b_h(t)dt /\xi_2^2 h ^{2\beta}}, a_j(t,x) =\phi(x) e^{-i \int b_h(t)dt /\xi_2^2 h ^{2\beta}}, \\ \phi \in C_0^{\infty}(\mathbb{R} \times \mathbb{R}^n), \phi(0)=1. \end{align}
The condition is as before that $\beta_h(t) \arrowvert_{ \Sigma_2(P(h))}$ changes sign in an interval near $0$, so we get by Taylor \begin{equation}t \mapsto  \beta_h(t) \sim \beta(t)\arrowvert_{ \Sigma_2(P(h))} + \mathcal{O}(h^{\beta}). \end{equation} For $h$ small enough this means that $ \int b(t)dt$ first increases on the interval and then decreases, so it has a maximum in the interval. We may integrate from the maximum, which we assume is zero, at $t=0$ so that $ \int_0^t b(t)dt \le 0$. We observe that the functions $b(t)$ are uniformly bounded in $C^{\infty}.$ Now we can apply this to an asymptotic expansion, and it is, with $b_j$ \begin{equation}P(h)v_h \sim e^{ix_2\xi_2 /h^{1-4\beta}} h^{1+2\beta} \sum_{j \ge 0} b_j h^{j\beta}. \end{equation} where $b_j$ are like $c_j$ in the transversal case or \begin{align}b_j \sim \xi_2^2 D_1a_j + h^{-2\beta}b(t) a_j + S_j(h) \ \sim e^{-i \int b(t)dt /\xi_2^2 h ^{2\beta}} (\xi_2D_1 \phi_j(x) + R_j(h)) = 0. \end{align} The terms $S_j(h)$ all have the exponential, so we can factor it out. The $R_j(h)$ is the type of term that is left, and it is just derivations of  $\phi_k$ uniformly bounded depending only on $\phi_k$ for $k < j$, following the remainder formula. We get as before with\begin{equation}h^{1-3\beta} D_1D_2 a_j = h^{1-3\beta} D_2(-S_n(h) -(h^{-2\beta}b(t)a_j/ \xi_2^2)) = \mathcal{O}(h^{1- 5\beta}) \end{equation} These terms will be entered later in the expansion. Theorem 3.1 now follows in the same way as was done in the first article [2]. \end{proof} We shall now handle the general case when $q(x, \xi))$ is a non-degenerate quadratic form in $\xi $, in the small neighborhood $\Omega \ni (x_0, \xi_0) \in \Sigma_2(P(h))$. \begin{proposition} Let the operator have the symbol \begin{equation} p = \tau(\tau + \sum_{jk}q_{jk} (t,x,\xi)\xi_j\xi_k) +hb(t,x,\tau,\xi). \end{equation} Then we can substitute $q(x, \xi))$ the non-degenerate quadratic form in $\xi $ to get \begin{align}h^{-2\beta}b_h(t)a+ q_0(t)D_1a + h^{1-2\beta}D_1^2a + 2h^{1- 4\beta} q_1(t,D_x) \\ D_1a + h^{2(1-4\beta)} q_2(t,D_x)D_1a = \mathcal{O}(h^{\beta}) \end{align} \end{proposition}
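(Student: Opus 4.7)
The plan is to extend the WKB construction of Proposition 3.2 by substituting the general non-degenerate quadratic form $q(x,\xi)=\sum_{jk}q_{jk}(t,x,\xi)\xi_j\xi_k$ from the normal form of Lemma 2.1 in place of the model $-\xi_2^2$. I would keep the ansatz $v_h=e^{ig/h^\alpha}\sum_j a_j h^{j\beta}$ with phase $g=\langle x',\xi'\rangle$ adapted to the vanishing of $p$ along the limit bicharacteristic, and keep the parameter relations $\gamma=3\beta$ and $\alpha=1-4\beta$ from (2.11) so that the overall power of $h$ factored out in the expansion is still $h^{1+2\beta}$, matching (2.12).

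The central step is to exploit bilinearity of $q$. Under the conjugation and scaling, each $hD_{x_j}$ inside $q$ becomes $h^{-3\beta}D_{x_j}+h^{\beta-1}\xi_j$ as in (2.12), and polarization gives the clean split
\begin{equation*}
q\bigl(h^{-3\beta}D_{x'}+h^{\beta-1}\xi'\bigr)
= h^{2(\beta-1)} q_0(t) + 2 h^{-2\beta-1} q_1(t,D_{x'}) + h^{-6\beta} q_2(t,D_{x'}),
\end{equation*}
where $q_0(t)$ is the scalar restriction of $q$ to the limit bicharacteristic, $q_1$ is the first-order operator obtained by polarizing $q$ against $\xi'$, and $q_2$ is the pure second-order derivative part. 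Multiplying by $h^{2(1-\beta)}D_1$, adding the Taylor expansion $b_h(t,h^\gamma x,h^\beta\xi')\sim b_h(t)+\mathcal{O}(h^\beta)$ of the subprincipal symbol, keeping the unchanged contribution $h^{1-2\beta}D_1^2 a$, and then dividing through by the common factor $h^{1+2\beta}$ yields exactly the displayed identity with $q_0(t)D_1 a$, $2h^{1-4\beta}q_1(t,D_x)D_1 a$, and $h^{2(1-4\beta)}q_2(t,D_x)D_1 a$ appearing in the right places.

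The resulting first transport equation $q_0(t)D_1 a+h^{-2\beta}b_h(t)a=0$ is then solved explicitly as in (3.2) by $a=\exp\bigl(-i\int_0^t b_h(s)\,ds/q_0(t)h^{2\beta}\bigr)$, and Theorem 3.1 follows exactly as in Proposition 3.2. The main obstacle will be verifying that $q_0(t)\neq 0$ along the limit bicharacteristic in $\Omega$, which is required to divide out $q_0(t)$ in the solution of the transport equation; this should be a consequence of the non-degeneracy of $q$ combined with the geometry supplied by Lemma 2.1, but needs to be made uniform on the small neighborhood so that the quasimode is well-defined throughout the support of $\phi$. A secondary bookkeeping obstacle is checking that the polarizations $q_1(t,D_x)$ and $q_2(t,D_x)$ still satisfy the general Ordo-estimate (2.13) with $j_{1|2}=2$; since $q_1$ is first-order and $q_2$ is second-order in $D_{x'}$ applied to the smooth amplitude $\phi$, the bounds (2.14)-(2.16) carry over term by term with the same combinatorics as in Proposition 3.2.
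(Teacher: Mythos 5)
Your approach is essentially the same as the paper's: after keeping $\gamma=3\beta$, $\alpha=1-4\beta$ the paper writes down the rescaled operator with the general $q_{jk}$-form and replaces $\xi_2^2$, $\xi_2D_2$, $D_2^2$ by $q_0(t)$, $q_1(t,D_x)$, $q_2(t,D_x)$, which is exactly your polarization split with the matching powers $h^{0}$, $h^{1-4\beta}$, $h^{2(1-4\beta)}$ after dividing out $h^{1+2\beta}$. Your flagged obstacle about $q_0(t)\neq 0$ on the limit bicharacteristic is reasonable but is implicit in the paper's setup: non-degeneracy of $q$ from Lemma 2.1 together with the choice of a nonzero frozen frequency $\xi'$ in the phase $g=x'\cdot\xi'$ guarantees $q_0(0)\neq 0$, and continuity gives it uniformly on the small neighborhood $\Omega$.
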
 \begin{proof} We get the scaled operator, and we use the standard expansion now \begin{multline} P(h) = h^2D^2_t + h^{3(1-2\beta)} \sum_{jk}q_{jk}(t,h^{3\beta}x,\tau, h^{-3\beta}\xi)D_tD_jD_k \\ + hB(t, x^{3\beta}, hD_t, h^{1-3\beta}D_x),\quad j,k>1 \end{multline} and if we take $g(x)= x \cdot \xi= x_2 \xi_2$  this will still give us the term $h^{2}D_t^2$. We will get the same equations as before, if we substitute $$\xi_2^2 \to q_0(t) = \sum_{jk}q_{jk} (t,0,0)\xi_j\xi_k $$ $$\xi_2D_2 \to q_1(t,D_x) = \sum_{jk}q_{jk}(t,0,0)\xi_jD_k $$ and $$D_2^2 \to q_2(t,D_x) = \sum_{jk}q_{jk}(t,0,0)D_jD_k. $$ The corresponding transport equation with the same values for the parameters is again \begin{align}h^{-2\beta}b_h(t)a+ q_0(t)D_1a + h^{1-2\beta}D_1^2a + 2h^{1- 4\beta} q_1(t,D_x) \\ D_1a + h^{2(1-4\beta)}q_2(t,D_x)D_1a = \mathcal{O}(h^{\beta}) \end{align} and we can proceed as when we used $ \xi_2^2$ as a stand in for $q(\tau, \xi)$ in the start of the proof.\end{proof} 
\section{Operators with ${\partial_{\xi}\beta}$-condition, Annihilation of ${\beta}$-condition in Theorem of Factorization} We now look into the case $\partial_{\xi}\beta(x, \xi)$-condition, where $\sigma_{\text{sub}}(P(h))$ becomes identically zero because there is a derivative connected to the subprincipal symbol. \begin{theorem} Let the conditions be as in Theorem 3.1, but now  $P(h)= hD_1(hD_1 + h^2D^2_2) + h^2B(x,hD_x)D_2, $ with $\sigma_{\text{sub}}(P(h))=b(x,\xi) \xi_2$. If the $\emph{derivative}$ of the imaginary part of the subprincipal symbol ${\partial_{\xi}\beta(x, \xi)}$ changes sign on a limit bicharacteristic in $\Sigma_2(z) \cap \Omega$, then $z \in \sigma^{\infty}_{scips}(P(h))$, the injectivity pseudospectrum of infinite order.
We have the following solution formula \begin{equation}a = e^{-i \int_0^t b_h(t)dt/\xi_2  h ^{\beta}} \end{equation} for the transport equations, the quasimodes, where $ D_1 i \int_0^t b_h(t)dt/\xi_2  = b_h(t) / \xi_2$ and $a_0=a$ and $a_j(t,x) =  \phi_j(x)e^{-i \int_{0}^t b_h(t)dt/\xi_2  h ^{\beta}}$ in the asymptotic expansion. \end{theorem}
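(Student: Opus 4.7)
The plan is to follow the strategy of Proposition 3.2, adapting the scaling and the transport hierarchy to the operator $P(h) = hD_1(hD_1 + h^2 D_2^2) + h^2 B(x,hD_x) D_2$, whose subprincipal symbol $b(x,\xi)\xi_2$ vanishes on $\Sigma_2(P(h))$ because $\xi_2 = 0$ there. Since $\beta$ itself is identically zero on the double characteristic set, the sign-change information has to come from the derivative $\partial_\xi \beta$, and this is made precisely available by the extra $\xi_2$ factor: after the symplectic scaling $\xi_2 \mapsto h^\beta \xi_2$ one effectively evaluates at $\xi_2 = 0$, and Taylor's formula in $\xi_2$ extracts $\partial_{\xi_2}\beta$ to leading order.

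First I would use the WKB ansatz $v_h = e^{ig/h^\alpha}\sum_{j=0}^N a_j h^{j\beta}$ with phase $g = x_2\xi_2$ so that the eikonal equation is trivially satisfied, and perform the symplectic scaling $(x_j, D_j) \mapsto (h^\gamma x_j, h^{1-\gamma}D_j)$ for $j>1$ with the tangential choice $\gamma = 3\beta$, $\alpha = 1-4\beta$ (so that $\alpha+\beta+\gamma = 1$). The important difference with Proposition 3.2 is that the subprincipal term $h^2 B(x,hD_x) D_2$, after conjugation and scaling, contributes at order $h^{1+\beta}\xi_2 b_h(t)$ rather than at order $h \cdot h^{-2\beta} b_h(t)$, because the extra factor of $D_2$ picks up an additional $h^{1-\gamma}$ which combines with the $h^{4\beta}\xi_2$ from the conjugation to supply one power of $h^\beta$ more than before. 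The leading transport equation then takes the form $\xi_2^2 D_1 a + h^\beta \xi_2 b_h(t) a = 0$, equivalently $D_1 a = -h^{-\beta} b_h(t) a/\xi_2$, which is solved by $a = \exp(-i\int_0^t b_h(t)\,dt/(\xi_2 h^\beta))$ exactly as stated.

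Next I would compute $e^{-ig/h^\alpha}P(h)e^{ig/h^\alpha}$ in full via the Duistermaat-type expansion of Section 2, factor out the common leading power of $h$, and solve the higher-order equations in the hierarchy by $a_j = \phi_j(x)\exp(-i\int_0^t b_h/(\xi_2 h^\beta))$ with $\phi_j$ obtained by quadrature along $t$. Separately, I would check, as in (3.11)--(3.13) of the previous proof, that the remainder terms $h^{1-2\beta}D_1^2 a$, $2h^{1-4\beta}\xi_2 D_1 D_2 a$ and $h^{2(1-4\beta)}D_1 D_2^2 a$, together with the Taylor remainder, all remain of strictly positive order in $h$ for a suitable open range of $\beta \in (0,1)$, so that the full expansion yields $||P(h)v_h|| = \mathcal{O}(h^N)$ for every $N$.

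The main obstacle is converting the $\partial_\xi \beta$ sign-change condition into the non-positivity of $\im\int_0^t b_h(t)/\xi_2$ required when invoking Lemma 2.6 to produce the normalized quasimode. In Proposition 3.2 this followed directly from the sign change of $\beta(t)$ itself, giving a strict maximum of $\int_0^t \beta$ at $t=0$; here, since the effective $b_h$ seen by the phase is really $\partial_{\xi_2}[\beta(x,\xi)\xi_2]|_{\xi_2=0} = \beta(x,0,\xi')$, the hypothesis that $\partial_\xi\beta$ changes sign on the limit bicharacteristic produces precisely the same sign behaviour for the scaled integrand after dividing out the common factor. Once $\im\int_0^t b_h/\xi_2 \le 0$ with strict inequality away from $t=0$ is established, Lemma 2.6 gives the upper and lower $L^2$ bounds on $v_h$, the normalization yields a quasimode $u(h)$ with $||u(h)|| = 1$ and $||P(h)u(h)|| = \mathcal{O}(h^N)$ for every $N$, and the conclusion $z \in \sigma^\infty_{scips}(P(h))$ follows as in Theorem 3.1.
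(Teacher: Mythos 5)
Your proposal follows essentially the same route as the paper's proof: the same WKB ansatz with phase $g = x_2\xi_2$, the same tangential scaling $\gamma = 3\beta$, $\alpha = 1-4\beta$, the correct observation that the extra $D_2$ factor shifts the subprincipal contribution by one power of $h^\beta$, the same transport equation and solution $a = \exp(-i\int_0^t b_h\,dt/(\xi_2 h^\beta))$, and the same appeal to Taylor expansion in $\xi_2$ (isolating $\partial_{\xi_2}\beta$ when $\beta|_{\Sigma_2}\equiv 0$) together with Lemma 2.6 and the conclusion of Theorem 3.1. One small slip: your displayed transport equation should carry $h^{-\beta}$, namely $\xi_2^2 D_1 a + h^{-\beta}\xi_2 b_h(t)a = 0$, consistent with the $D_1 a = -h^{-\beta}b_h(t)a/\xi_2$ you state immediately afterward.
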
 \begin{proof} The model operator is now $$P(h)= hD_1(hD_1+h^2D_2^2) + h^2 B(x,hD_x)D_2$$ and the expansion, scaling and factoring out gives the subprincipal symbol \begin{align} hb(t, h^{3\beta}x,h^{\beta} \xi_2)h^{\beta}\xi_2 a = h^{1+2\beta}(h^{-\beta}b(t, x^{3\beta}, h^{\beta} \xi_2) \xi_2 a. \end{align} The rest is the same as before \begin{align}h^{1+2 \beta}(\xi_2^2D_1a + h^{1- 2\beta}D_1^2a + 2h^{\alpha} \xi_2 D_1D_2a + h^{2\alpha} D_1D_2^2a \\  +h^{-\beta}b(t,h^{\beta}\xi_2)\xi_2 a + \mathcal{O}(h^{\beta})) \end{align} and we solve modulo terms that are $\mathcal{O}(h^{\beta})$ and the solution is \begin{equation}a(t,x) =\phi(x) e^{-i \int b_h(t)dt /\xi_2h ^{\beta}} \end{equation} as we only have $h^{-\beta}$  in front of $b$ and there is a $\xi_2$-factor in the exponent coming from $\sigma_{\text{sub}}(P(h))=b(x,\xi) \xi_2$. \end{proof} Now the subprincipal symbol is identically zero at $\Sigma_2(P(h))$, so when we look at the condition we get, and we remember that $\beta \in \mathbb{R}(0,1)$ in the exponent, \begin{equation} t \mapsto \beta(t,h^{\beta}\xi_2)= \beta(t)\arrowvert_{ \Sigma_2(P(h))}+h^{\beta} \partial_{\xi_2} \beta(t)\arrowvert_{\Sigma_2(P(h))}\xi_2+ \mathcal{O}_{\mathcal{T}}(h^{2\beta}). \end{equation} 
If $t \mapsto \beta(t)$ changes sign we have the first case, if $\beta(t) \equiv 0 $ then $t\mapsto h^{\beta} \partial_{\xi_2} \beta(t,0)\xi_2$ changes sign and we get quasimodes in the the $ \partial_{\xi_2}(\beta(x, \xi))$-condition. \footnote{We cannot also have $h^{\beta} \partial_{\xi_2} \beta(t)\arrowvert_{\Sigma_2(P(h))}\xi_2=0$, as this means that it is not possible to make an analysis on $b$, the same that happened when we abandoned the principal symbol. This is sometimes called an operator of \emph{subprincipal type}.}
So far, we have looked into the cases where we have $$ P(h)=hD_{x_1}(hD_{x_1}-Q(x,hD_{x'}) + h B(x,hD_x)$$ and $$P(h)= hD_1(hD_1 + h^2D^2_2) + h^2B(x,hD_x)D_2,$$ which we now write a little bit differently, because we like to generalize Theorem 4.1 in a natural way. We increase the exponent $k>0$ in $\xi_2^k$ that is linked to the subprincipal symbol, and we also do the same for $\xi_2^j$ using the positive numbers $j \land k$ with fixed and bounded value, in the exponent \begin{equation} P(h)= h^2D_1(hD_1-h^jQ(x,hD_{x'}) + h^k  hB(x,hD_x)h^kQ(x,hD_{x'}).\end{equation}
First, we recall the factorization result in the transversal case that annihilates the subprincipal control of the quasimodes, along with the proof, which can be useful.\begin{theorem*} For the transversal case, we have the model operator as \begin{align}P(h)=hD_1hD_2+hB(x,hDx)\\ =h^2D_1D_2+A_1(x)h^2D_1+A_2(x)h^2D_2+hR(x)) \end{align} In the cases when $R(x)=0$, we can factorize modulo $\Psi^0$ and get $h^2P_2P_1v_h=0$ and no quasimodes, and we have no double multiplicity for $p$ any longer. \end{theorem*}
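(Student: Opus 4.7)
The plan is to exhibit explicit first-order factors and verify by a direct composition that they reproduce $P(h)$ up to an error of lower order, and then to read off the failure of the $\beta$-mechanism from the resulting form. Concretely, I would take
\begin{equation*}
P_1 = hD_1 + hA_2(x), \qquad P_2 = hD_2 + hA_1(x),
\end{equation*}
whose principal symbols $\xi_1$ and $\xi_2$ multiply to give $p = \xi_1\xi_2$, and whose real linear potentials are forced by the first-order terms $A_1 h^2 D_1$ and $A_2 h^2 D_2$ present in $P(h)$.

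Using the semiclassical commutation rule $(hD_j)\circ f(x) = f(x)\,hD_j - ih\partial_j f$, the composition expands to
\begin{equation*}
P_2 P_1 = h^2 D_1 D_2 + A_1 h^2 D_1 + A_2 h^2 D_2 + h^2\bigl(A_1 A_2 - i\partial_2 A_2\bigr),
\end{equation*}
so when $R(x) \equiv 0$ one obtains $P(h) - P_2 P_1 = -h^2 r_0(x)$ with $r_0 \in S^0$, i.e., factorization modulo $\Psi^0$. If factorization modulo $\mathcal{O}(h^\infty)$ is desired in place of $\Psi^0$, the standard route is to add formal lower-order corrections $h^k q_k^{(j)}(x)$ to each $P_j$ and solve the resulting triangular system recursively in $k$; each step is solvable and pushes the remainder one higher order in $h$.

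With such a factorization in hand, the non-existence of quasimodes follows from inspecting the subprincipal data of the factors: the subprincipal symbols of $P_1$ and $P_2$ are $A_2(x)$ and $A_1(x)$, which are real, so $\im\sigma_{\text{sub}}(P_j)\equiv 0$ on $\Sigma_2(P(h))$. Neither factor can satisfy a $\beta$- or a $\partial_\xi\beta$-condition, so the transport-equation construction of Proposition 3.2 and of the transversal analogue in [2] produces no sign change along any limit bicharacteristic and the ansatz $v_h$ used there cannot be upgraded to a normalized approximate null mode. Equivalently, away from $\Sigma_2$ each factor is elliptic in the appropriate cone, and one can build a microlocal parametrix by inverting $P_2$ and then $P_1$ along their independent bicharacteristic foliations, yielding $\|R(h)\| = \mathcal{O}(h^{-2})$; this is polynomial growth rather than the $h^{-N}$ blow-up required for $\zeta \in \sigma^\infty_{scips}$.

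The main obstacle I expect is the uniform validity of the parametrix near $\Sigma_2$, where $\xi_1$ and $\xi_2$ vanish simultaneously: one has to combine a microlocal cutoff with propagation of semiclassical singularities along each factor, using that the Hamilton fields $H_{\xi_1}$ and $H_{\xi_2}$ point in independent directions off the double set. This same observation shows that the hypothesis $R(x)=0$ is sharp: a nonzero $\im R$ cannot be absorbed by real corrections to $A_1$, $A_2$ or to the lower-order $q_k^{(j)}$, and it is precisely $\im R$ that supplies the $\beta$-sign change exploited to produce quasimodes when the operator fails to factor.
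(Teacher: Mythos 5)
Your factorization computation is correct and in fact more explicit than the paper's. Expanding $(hD_2+hA_1(x))(hD_1+hA_2(x))$ reproduces the $h^2$-order terms of $P(h)$ with an error $h^2\bigl(A_1A_2 - i\partial_2 A_2\bigr)$, a multiplication operator and hence in $\Psi^0$. The paper states the same factorization more tersely, $hD_1h^jD_2 + A(x)h^kD_2 \cong h^{j+k}D_2(D_1+A(x))$, leaving the commutator implicit and tracking only a single coefficient $A$ rather than both $A_1$ and $A_2$.

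The step that is supposed to rule out quasimodes contains a gap, and it also departs from the paper's argument. You assert that $A_1$, $A_2$ are \emph{real}, hence $\im\sigma_{\text{sub}}(P_j)\equiv 0$; but this is not among the hypotheses. The operator $B(x,hD_x)$ has symbol $b = \alpha + i\beta$, generally complex, and the reduction $B = A_1D_1 + A_2D_2 + R$ correspondingly yields complex-valued $A_1$, $A_2$, $R$. Without the reality assumption you cannot conclude that the $\beta$-condition fails for each factor on these grounds. (The salvageable version of your remark is different in content: each $P_j$ is now of \emph{principal type}, so $\Sigma_2(P_j)=\emptyset$ and the $\beta$-condition is posed on an empty set. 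This is closer to the paper's own observation that after factoring in the lower-order term one has $d_\xi\bigl(\xi_1(\xi_2+B(x))\bigr) = B(x)\,d\xi_1 \neq 0$ at $\xi_1=\xi_2=0$, i.e., the double multiplicity is gone.)

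The paper's actual mechanism for ruling out quasimodes is also genuinely different from what you propose: it tracks the transport-equation solution formula and shows that in the factorable case $n = k-j = 0$, so the amplitude degenerates from $a(t) = e^{-i\int b(t)\,dt/(h^{n\beta}\xi_2^n)}$ to $a(t) = e^{-i\int b(t)\,dt}$. With no negative power of $h$ in the exponent, the ansatz no longer concentrates as a wave packet, so the sign-change mechanism for $b$ has nothing to act on. Finally, your parametrix alternative giving $\|R(h)\| = \mathcal{O}(h^{-2})$ is left as a sketch, and you yourself flag the obstruction near $\Sigma_2$ where both factors are characteristic; that is precisely where such a bound would have to be proved, so as written this part is a program rather than a proof.
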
 \begin{proof} We let $R(x)=0$ and for $(\text{mod} \Psi^0)$ \begin{align} P(h)=hD_1h^jD_2+A(x)h^kD_2 \cong h^{j+k}D_2(D_1+A(x)) = h^2P_2P_1. \end{align} Here, we do not get quasimodes because of factorization to $h^{j+k}P_1P_2$ and the solution $$a(t) =e^{-i \int b(t)dt)/(h^{n\beta}\xi_2^{n})}$$ becomes here instead $a(t)=e^{-i  \int b(t)dt}, \ n=k-j=0; \ j,k=1,2.$ When we consider the differential \begin{align}d_{\xi}p(x,\xi)=d_{\xi}(\xi_1(\xi_2+B(x))=(\xi_2+B(x))d\xi_1+\xi_1 d\xi_2) \not =0 \land (\xi_1=\xi_2=0). \end{align} Recall that for $dp(\xi)=dp(\xi_1\xi_2)=(\xi_2 d\xi_1 + \xi_1d\xi_2)=0) \land (\xi_1=\xi_2=0.)$ \end{proof} After this, we look at \begin{theorem} Consider the model operator for the tangential case in the form $$P(h)=hD_1(hD_1 + h^kD_2^k) + hB(x, hD_x)h^jD_2^j.$$ We put $Q(x,hD_x)=h^{j|k}D_2^{j|k}$ with symbol $q(x,\xi) = q(\xi^{j|k}_2).$ For $n>0$ we have $$\xi_2^kD_1a=h^{-n\beta}b(t,h^{\beta} \xi_2)\xi_2^j a.$$ The solution is $$a(t) =e^{-i \int b_h(t)dt)/(h^{n\beta}\xi_2^{n})}$$ and we get quasimodes as before, but when $0=n=k-j,(k=j)$ we instead get $$P(h)=P_1(h)P_2(h)$$ and no quasimodes in $\sigma_{scips}^{\infty}$ as $P_j(h), j=1,2$ now lacks the subprincipal control and $$a(t) =e^{-i  \int_0^t b(t)dt}.$$ \end{theorem}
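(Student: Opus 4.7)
The plan is to mirror the proof of Proposition 3.2, but with the exponents $j$ and $k$ tracked carefully throughout the scaling. First I would apply the WKB ansatz $v_h = e^{i x_2 \xi_2/h^\alpha} a$ to the model operator
\begin{equation*}
P(h) = hD_1(hD_1 + h^k D_2^k) + h B(x,hD_x) h^j D_2^j,
\end{equation*}
after making the coordinate rescaling $(x_j, hD_j) \mapsto (h^\gamma x_j, h^{1-\gamma}D_j)$ for $j>1$. The parameter relation from Lemma 2.5 of [2], specialized to the current $k$, pins down $\gamma = (k+1)\beta$ and $\alpha = 1-(k+2)\beta$, so the overall prefactor becomes $h^{1+k\beta}$. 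Using Taylor in $x$ for $b$ and expanding the principal part, I expect the leading transport equation to read
\begin{equation*}
\xi_2^k D_1 a + h^{-n\beta} b_h(t)\, \xi_2^j a \;=\; \mathcal{O}(h^\beta),\qquad n := k-j,
\end{equation*}
with the higher-order cross terms $h^{1-2\beta}D_1^2 a$, $h^{1-(k+2)\beta}\xi_2 D_1 D_2 a$, etc., all absorbed in $\mathcal{O}(h^\beta)$ after the substitution $D_1 a = -h^{-n\beta}b_h(t)\, a/\xi_2^n$, exactly as the ordo-check performed after (3.10)–(3.12).

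Next, for the case $n>0$, I would solve this first transport equation by the integrating factor
\begin{equation*}
a(t) = \exp\!\Bigl(-i \int_0^t b_h(s)\, ds\,\Big/\, h^{n\beta}\xi_2^n\Bigr),
\end{equation*}
and then build the asymptotic tail $a_j(t,x) = \phi_j(x)\, a(t)$ by solving the subsequent transport equations, whose right-hand sides are polynomial expressions in the $\phi_k$ with $k<j$. The sign-change hypothesis on $\beta(t)$ guarantees that $\im \int_0^t b_h(s)\, ds \le 0$ near a maximum (by Taylor, since $b_h \sim \beta + \mathcal{O}(h^\beta)$), so the ansatz is $L^2$-bounded from above and uniformly bounded from below by Lemma 2.6 of [2]. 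The conclusion $\zeta \in \sigma^\infty_{scips}(P(h))$ then follows as in Theorem 3.1.

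For the degenerate case $n = 0$, i.e. $j = k$, the $h^{-n\beta}$ factor is absent, so the subprincipal symbol cannot be integrated up to a phase with an unbounded real exponent — the ansatz collapses to $a(t) = e^{-i\int_0^t b(s)\,ds}$, a bounded modulation that produces no pseudospectral growth. I would then verify that modulo $\Psi^0$ (the analog of the transversal factorization in Theorem 4.2$^\ast$ of the introduction) one has
\begin{equation*}
P(h) \;\equiv\; hD_1 \cdot \bigl(hD_1 + h^k D_2^k + B(x,hD_x)h^{j}D_2^{j-1}\bigr) \pmod{\Psi^0},
\end{equation*}
since $hD_1$ commutes with $h^j D_2^j$ up to lower order, so that $P(h) = P_2(h)P_1(h,B)$ with $P_2 = hD_1$ and $P_1$ absorbing $B$. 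In this decomposition neither factor is doubly characteristic at $(x_0,\xi_0)$, so the subprincipal symbol of the product is no longer in control in the sense of (1.2)–(1.3); the limit bicharacteristic hypothesis becomes vacuous and no quasimodes are produced.

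The main obstacle I anticipate is the factorization step itself, particularly justifying the commutator cancellations when $Q(x,hD_x) = h^k D_2^k$ is replaced by the general non-degenerate quadratic form from Proposition 3.3 — the symbolic computation $[hD_1, h^k D_2^k] \in h \cdot \Psi^{\ast}$ is immediate in flat coordinates, but with variable $q_{jk}(t,x,\xi)$ one must track where the commutator remainders live in the scaled calculus, and verify that they do not resurrect an $h^{-n\beta}$ singular factor when $n=0$. I would handle this by computing the Weyl symbol of $P_2 P_1$ term by term and comparing with the rescaled expansion used for $n>0$, confirming that the cancellation of the singular prefactor is exact precisely at $k=j$.
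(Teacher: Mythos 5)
Your handling of the $n>0$ branch is in the right spirit: the paper only sketches an induction on $n$ with base cases $n=2$ (Theorem 3.1, where $k=2,j=0$) and $n=1$ (Theorem 4.1, where $k=2,j=1$), whereas you propose rederiving the transport equation directly with $k$-dependent scaling exponents $\gamma=(k+1)\beta,\ \alpha=1-(k+2)\beta$; for $k=2$ these recover the paper's $\gamma=3\beta,\alpha=1-4\beta$, so this is a reasonable, arguably more explicit, route to the same formula $a=e^{-i\int b_h\,dt/h^{n\beta}\xi_2^n}$.

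The factorization at $n=0$ is where the proposal genuinely fails. You propose $P_2=hD_1$ and $P_1=hD_1+h^kD_2^k+B(x,hD_x)h^jD_2^{j-1}$, so that
\begin{equation*}
P_2P_1=h^2D_1^2+h^{k+1}D_1D_2^k+h^{j+1}D_1\,B\,D_2^{j-1},
\end{equation*}
while $P(h)=h^2D_1^2+h^{k+1}D_1D_2^k+h^{j+1}B\,D_2^{j}$. Matching the last terms requires $D_1BD_2^{j-1}\cong BD_2^{j}$ modulo $\Psi^0$, i.e.\ $\xi_1\,b(x,\xi)\,\xi_2^{j-1}\cong b(x,\xi)\,\xi_2^{j}$. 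These differ by a first-order symbol $(\xi_1-\xi_2)b\,\xi_2^{j-1}$, not a zero-order one, so the proposed decomposition is wrong even in the flat constant-coefficient model, independently of the commutator subtleties you flag at the end. The paper instead absorbs $B$ into the $D_1$-factor: it sets $P_1(h)=h(D_1+B)$, rewrites
\begin{equation*}
P(h)=h^2D_1^2+h(D_1+B)h^kQ=h^2D_1^2+P_1(h)h^kQ,
\end{equation*}
then uses $P_1^2=h^2(D_1^2+2BD_1+B^2)$ to trade $h^2D_1^2$ for $P_1^2-2hBP_1$ mod $\Psi^0$, yielding $P(h)\cong P_2(h)P_1(h)$ with $P_2(h)=P_1(h)+h^kQ-2hB$. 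Your proposal also omits the two remaining steps the paper needs after the factorization: the check that $d(P_2P_1)\neq 0$ at $\xi_1=\xi_2=0$ (so the double characteristic is actually resolved by the factorization), and the a priori estimate $\|u\|\leq C\rho\|P_1u\|\leq C\rho^2\|P_1P_2u\|\leq C\rho^2\|P(h)u\|+\|u\|$ that converts the loss of subprincipal control into the non-existence of quasimodes. Without the correct factors these verifications cannot be carried out, so the $j=k$ case is not established by your argument.
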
 \begin{proof} For $n>0$ we see this by induction for $n \in \mathbb{N}$ and the cases $n=1,2$ are allready clear by $$a(t,x) =\phi(x) e^{-i \int b_h(t)dt /\xi_2^2h ^{2\beta}}$$ for $n=k-j=2-0=2$ and by $$a(t,x) =\phi(x) e^{-i \int b_h(t)dt /\xi_2h ^{\beta}}$$ where $k=2$ and $j=1$ gives $n=1$. For the second case, $j=k$ $$P(h)=hD_1(hD_1 + h^kQ) + hB(x, hD_x)h^kQ$$ we may factorize mod $\Psi^0$. We get with $P_1(h)=h(D_1+B), B\in \Psi^0$ \begin{align} P(h)=hD_1(hD_1 + h^kQ) + hBh^kQ= h^2D_1^2 + h(D_1+B)h^{k}Q \\=h^2D_1^2 + P_1(h)h^{k}Q. \end{align} For $P_1(h)$ the same and now $P_2(h)$
\begin{equation}\left\{  
\begin{array}{ll}
P_1(h)=h(D_1+B) \\
P_2(h)=P_1(h)+h^{k}Q-2hB
\end{array}\right.
\end{equation} we find \begin{align} (P(h)\cong P_1(h)^2+(h^{k}Q-2hB)P_1(h))\  \\ = ((P_1(h)+h^{k}Q-2hB)P_1(h)\cong P_2(h)P_1(h)) \land (\text{mod} \Psi^0). \end{align} as we may write $(\text{mod} \Psi^0)$  $$P_1(h)^2+(h^{k}Q-2hB)P_1(h))$$ $$=h^2(D_1^2+2BD_1+B^2)+P_1(h)h^{k}Q-2h^2BD_1-2h^2B^2 \cong h^2D_1^2 + P_1(h)h^{k}Q,$$ which we had above.
This procedure means we have factored out the dependence on lower-order terms, which again does not yield any quasimodes. We have 
\begin{equation}\left\{ 
\begin{array}{ll}
P_1(x,\xi)=\xi_1+B \\
P_2(x,\xi)=\xi_1-2B+\xi_2^k\\
P_2P_1(x,\xi)=\xi_1^2-B\xi_1+\xi_1\xi_2^k+B^2+B\xi_2^k
\end{array}\right.
\end{equation} For control of the differential with factorization we find \begin{align}d(P_2P_1(x,\xi))=(2\xi_1-B+\xi_2^k)d\xi_1+ (k\xi_1+kB)\xi_2^{k-1}d\xi_2 \not = 0 \\ \land (\xi_1=\xi_2=0), \end{align} which is the same as in the transversal case above, (4.11).
We can now also use a priori estimate from the first article(Appendix B) coming from the definition of semiclassical injectivity pseudospectrum \begin{equation} \Vert P(h)u(h,b) \Vert > Ch^{N}.\end{equation}
We write for the neighborhood $\Omega$ where $u \in C ^{\infty}_0$ has support$(\rho)$ \begin{align} (x)[\rho] [u] [C]\bigl(||u||\leq C \rho ||P_1(h)u|| \\ \leq C \rho^2 ||P_1(h)P_2(h)u|| \leq C \rho^2 \textbf{.}||P(h)u||+ ||u||;\\ x \land x_0 \in \Omega, |x \leq \rho \leq x_0|, ||u|| = 1, C=C_{0|1|2}\bigr).\end{align} We now have $u$ instead of $u(h,b)$ as none of ($P(h) | P(h)_{1\lor 2})$ can give quasimodes.\end{proof}
We summarize these findings in a final Factorization Theorem. \begin{theorem} Let the operator with transversal intersections of bicharacteristics be written $$ (i) \ P(h)=hD_1h^jD_2+A(x)h^kD_{1|2}\cong h^{j+k}D_{2|1}(D_{1|2}+A(x)) = h^2P_1P_2.$$ If this factorization, $mod \ \Psi^0$, is possible, there will be no quasimodes.
For the operator in the tangential case $$ (ii) \ P(h)=hD_1(hD_1 + h^jQ(t,x;\xi)) + h^{k+1}B(x, hD_x)Q(t,x,\xi)$$ with $(Q(t,x,\xi)\sim \xi_2^{j|k}) \land (j,k)\in \mathbb{N}$  of fix bounded order). Then, for $j>k$, the condition for quasimodes is a sign change for the derivative of the subprincipal symbol, but when $k=j$, it is possible to factorize to $P_2(h)P_1(h,B),$\ in modulo $\Psi^0;$ the control from the subprincipal symbol is lost, and we do not get quasimodes. \end{theorem}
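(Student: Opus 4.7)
The plan is to treat parts (i) and (ii) in parallel, since both rest on the same mechanism: when the operator factors modulo $\Psi^0$, the double multiplicity of the principal symbol at $\xi_1=\xi_2=0$ is destroyed, the subprincipal symbol loses its controlling role, and no wave-packet ansatz can be sustained. In each case the argument closes with the a priori estimate from Appendix B of the first article, $\|u\|\le C\rho^2\|P_1P_2u\|$, which rules out $\|P(h)u\|=\mathcal{O}(h^N)$ for a normalized $u$. Both parts are consolidations of material already in hand: (i) of the transversal factorization theorem recalled just above Theorem 4.2, and (ii) of Theorems 4.1 and 4.2.

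For (i) I would reproduce the factorization computation directly: writing $P(h)=hD_1h^jD_2+A(x)h^kD_{1|2}$ and working modulo $\Psi^0$, the common power of $h$ factors out to give $h^{j+k}D_{2|1}(D_{1|2}+A(x))=h^2P_1P_2$. The key verification is the differential check at $\xi_1=\xi_2=0$ already displayed in the proof of the transversal factorization theorem, which shows that the characteristics of the factored operator are simple, so the double multiplicity that powered the original subprincipal construction is gone.

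For (ii) I would split into the sub-cases $j\neq k$ and $j=k$. When $j\neq k$ I would run the ansatz of Theorem 4.1 with the modified WKB exponent $a=e^{-i\int_0^t b_h(s)\,ds/(h^{n\beta}\xi_2^n)}$ for $n=|k-j|>0$; the transport equation $\xi_2^kD_1a=h^{-n\beta}b(t,h^\beta\xi_2)\xi_2^ja$ separates the powers of $h$ and $\xi_2$ correctly, and an induction on $n$ reduces everything to the base cases $n=1,2$ already treated in Theorems 3.1 and 4.1. The $\partial_\xi\beta$-condition enters as the Taylor coefficient of $b_h(t,h^\beta\xi_2)$ in $\xi_2$, exactly as at the end of Section 4. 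For the case $j=k$ I would carry out the explicit construction from the proof of Theorem 4.2, setting $P_1(h)=h(D_1+B)$ and $P_2(h)=P_1(h)+h^kQ-2hB$, and verifying $P(h)\equiv P_2(h)P_1(h)\pmod{\Psi^0}$ by direct expansion.

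The main obstacle, common to both parts, is the modulo-$\Psi^0$ bookkeeping: one must track the commutators that appear when the factors are multiplied back together and argue that they all lie in $\Psi^0$, so that they are absorbed by the a priori estimate rather than obstructing it. Once this is secured and the differential $d(P_2P_1)$ is verified to be nonzero at $\xi_1=\xi_2=0$, as in the computation following the factorization in the proof of Theorem 4.2, the Duistermaat--H\"ormander double-multiplicity mechanism that powered Theorems 3.1 and 4.1 is no longer available, so no quasimode $u(h,b)$ depending on $b$ can be produced, and the theorem follows.
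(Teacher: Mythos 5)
Your reading of the theorem as a consolidation of the transversal factorization result, Theorem 4.1, and Theorem 4.2 is correct, and your outline would in principle go through. But the paper's own proof is much narrower than your reconstruction: it does not re-run the factorization algebra, the differential check $d(P_2P_1)\neq 0$ at $\xi_1=\xi_2=0$, the modulo-$\Psi^0$ bookkeeping, the a priori estimate, or the induction in $n$ --- all of that is left to the preceding proofs. What the paper actually supplies here are exactly two short verifications that justify passing from the explicit models to the general $Q(t,x,\xi)\sim\xi_2^{j|k}$. First, it checks that the generalized transport term $q(x,\xi)hD_1a$ stays bounded, by substituting $\kappa=j$ into the general remainder formula
$(h^{1+j\beta})\mathcal{O}(h^{\lambda+\mu-1+(\kappa-(j+1)\mu-j)\beta})$ and getting $(h^{1+j\beta})\mathcal{O}(1)$. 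Second, it Taylor-expands $h^{1+k\beta}\xi_2^k\,b(t,x^\gamma,h^\beta\xi_2)$ in $x'$ to exhibit the factor $h^{(k-j)\beta}$ in front of $b$, using $\gamma-j\beta=(j+1)\beta-j\beta=\beta$ to bound the remainder; this exponent count is precisely what makes visible why the subprincipal symbol controls the ansatz when the exponent is positive and why that control collapses when $k=j$. Your proposal reproduces what the paper cites from earlier proofs but omits these two concrete computations, which are the only genuinely new content of the paper's proof. Adding the $\mathcal{O}(1)$ remainder check for $q(x,\xi)hD_1a$ and the $h^{(k-j)\beta}$ Taylor count would bring your argument in line with the paper's; as it stands your version is a valid but longer reassembly that re-proves what the paper merely references while skipping the specific estimates it actually carries out.
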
 \begin{proof} We shall also show that $q(x,\xi)hD_1a $ is bounded as before. Recall that for the remainder we have, here with $\kappa=j$ $$(h^{1+j_{1|2}\beta})\mathcal{O}(h^{\lambda+ \mu -1 + (\kappa- (j_{1|2}+1)\mu-j_{1|2})\beta }).$$ In this case we get $$q(x,\xi)hD_1 \sim h^{1+j\beta}\xi^j_2D_1a = (h^{1+j\beta})\mathcal{O}(h^{\lambda+ \mu -1 + (j- 2\mu +j)\beta })=(h^{1+j\beta})\mathcal{O}(1).$$
For the Taylor expansion in $x'$ for $h^{1+k\beta}\xi_2^k b(t,x^{\gamma},h^{\beta}\xi_2))$ we get $$h^{1+j\beta}(h^{(k-j)\beta}(b(t,h^{\beta}\xi_2+\mathcal{O}(h^{\gamma-j\beta}))$$ taking $\gamma-j\beta=(j+1)\beta-j\beta=\beta.$\end{proof}
\section {Summation and Discussion of Results in this Series, Background and Bibliography}
We shall now summarize and discuss our findings and comment on some of our sources in the Bibliography. To help with the summation and discussion, we made a table below with different cases and conditions: $(c/c)=c_{(j_{1|2|3}/k_{1|2|3|4})}.$
\begin{center}
\begin{tabular}{ l| l | l | l| l |l |l |l|l |}
case/condition  & 1.$\beta$& & 2.$\partial_{\xi} \beta$&  &3.$P_2(h)P_1(h,B)$ & & 4.$\alpha$ & \\
\hline
1. Transversal & $c_{(1/1)}$&$\surd $ & $c_{(1/2)}$&  &  $c_{(1/3)}$ & $\surd $ & $c_{(1/4)}$& \\
\hline
2. Tangential & $c_{(2/1)}$& $\surd $ & $c_{(2/2)}$&$\surd $ &$c_{(2/3)}$& $\surd $ & $c_{(2/4)}$& \\
\hline
3. Factorable & $c_{(3/1)}$ &  &$c_{(3/2)}$&$\surd $ &  $c_{(3/3)}$ & $\surd $& $c_{(3/4)}$& \\
\hline
\end{tabular}
\end{center}
In this series of articles, we consider special semiclassical pseudodifferential operators that, microlocally, have a principal symbol, as a product $P(\xi)=P_1(\xi)P_2(\xi) $. We find double multiplicity for the principal symbol $p=p_2(\xi)p_1(\xi) \in S^k, k$ is of fix bounded order, for some point $\nu=(x_0,\xi_0)\in \Omega.$ This means that $p=dp=0$. Of course $$(p_1p_2=0) \land (p_1(\nu) \lor p_2(\nu)) =0 ; \ dp= (p_2dp_1+p_1dp_2=0) \land (p_1(\nu) \land p_2(\nu)=0).$$ Our model operator looks like \begin{align} P_1(h)P_2(h)+hB(h)=h^2D_1D_2+hB(h); \ hD_1(hD_1+h^2D_2^2)+hB(h) \\ h^2D_1D_2+hB(h)hD_2; \ hD_1(hD_1+\mathcal{Q}_j)+hB(h)\mathcal{Q}_k \\ \text{and here} \ \mathcal{Q}_{j|k}(\xi) \sim \xi_2^{j|k}.\end{align}
In (5.2), an extra derivative is attached to the subprincipal part, making it factorable when $k=j$.
In the PDE theory of solvability, the transversal case has been considered, especially in [3], [4], and [8]. There the focus is on the adjoint operator $P^*$  and an estimate of the type $||u ||\leq ||P^*u||$ that  garanties solvability (with all hypotheses fulfilled) by the \emph{non-existens} of quasimodes in  $\nu=(\xi_1=\xi_2=0).$  Outside $\Omega$ the operator has no double multiplicity and the subprincipal symbol $B_1(x,\xi)$ gives back control to the principal symbol. In the neighborhood $\Omega \ni (\xi_1=\xi_2=0)$ for the transversal case we have $(H_{p_1}=\partial_{x_1}) \land (H_{p_2}=\partial_{x_2})$ so the bicharacteristics are lines in the $x_1$ and $x_2$ direction foliating $\Omega$ with transversal intersections, in this way generating planes (leaves). For this case, the $\beta$-condition $(c/c)=(1/1)$ in the table could be seen as the semiclassical analog of the condition in [8], but there are differences. We study a sign change at $t=0$ along a \emph{limit} bicharacteristic for the imaginary part of the subprincipal symbol. The limit bicharacteristic that we use is to be understood as we approach from outside the foliation, because $\partial_\xi=0$, so the bicharacteristic is just a point. This notion is not used in [8], but in [1], page 44, from 2012, we write:
“We call the possible limits of bicharacteristics at $\Sigma_2(P(h))={dp_0(x_0,\xi_0)=0}$ the \emph{limit bicharacteristics}”
\footnote{The limit of bicharacteristics was defined rigorously in [4] 2016 by “We say that a sequence of smooth curves $\Gamma_j$ on a smooth manifold converges to a smooth limit curve $\Gamma$ (possibly a point) if there exist parametrizations on uniformly bounded intervals that converge in $C^{\infty}$. If $p \in  C^{\infty}(T^*X)$, then a smooth curve $\Gamma \subset \Sigma_2 S^*X$ is a limit bicharacteristic of $p$ if there exists bicharacteristics $\Gamma_j$ that converge to it”.}
Also, here we do not need the scaling as in [8], where $ s_1=0$, $ 0 < s_2 < s_3 = … =s_n < 1$, and $2s_2=s_n$ as we approach the point $t=0$. We instead obtain the subprincipal symbol on the form $b(t) + \mathcal{O}(h^{\beta})$ by Taylor-expanding twice and then letting the semiclassical parameter $h\rightarrow 0$. Our $\gamma$-scaling is used to work with Taylor in the $(t,\underline{x})$ coordinates and to balance the remainder, so it serves different purposes. The parameters $\alpha, \beta$ and $\gamma$ are instead balancing factors in the system of transport equations, where the crucial quotient is $(\frac{\gamma }{\beta})=j_{2|3}\beta,$ to get a limited remainder term, while balancing the system of equations. The proof method differs: we use a proposition to prove the theorem indirectly, not used in [8]. Moreover, we have simplified and generalised the method compared to [1], making it clearer and easier to use.
The $(c/c)=(1/1)$ is a good start, and there we also advanced our knowledge of the special method we began using in [1]. We have also gained new insights into how the subprincipal symbol can be composed to obtain quasimodes. It does not work with an extra derivative $c_{(1/2)}$ because now it is possible to factorize with $(j=k=2)$ $$h^jD_1D_2+ h^kD_{1|2} B(x,hD_x)=h^jD_{2|1}(D_{1|2}+ B(x,hD_x))$$ and we lose the subprincipal control. In [3], we find a definition of an operator of subprincipal type, demanding that $\partial_\xi(b) \not = 0$. So this terminology is the same as for the principal symbol in the theory of local solvability: principal type, not principal type. These conditions are more commonly known as Condition$(\Psi)$, Sub$(\Psi)$, etc, but here we prefer the more straightforward approach that points to the fact that the conditions are usually connected to the imaginary part of a symbol, using a common way to write imaginary functions and numbers as in $\zeta=\alpha+i\beta$,  It also has the benefit that we have a terminology for pseudospectra that is not related to the problem of solvability. The subjects are related, but they are not twins. Continuing in the table, we find the $\alpha$-condition $=(1/4)$. This can be the case when $\beta$ is constant on the leaves of foliation, so we cannot have a change of sign; instead, the real part $\alpha$ can play a role. We have not advanced far here, but we have identified that this case $\cdot/4$ does exist.  Eventually, this can mean that the theorem in [8] must be weakened by imposing conditions on how $B(x,D_x)$ must be composed for the theorem to hold. It will hold if one treats $B$ as just  $B=\re B +\im B$ but as the reduction gives $B(x,D_x)=A_1(x)D_1+A_2(x)D_2+ R(x)$ you may think that it should work for all combinations, but on our side we do not get quasimodes due to factorization when $R(x)=0$, so this can have an impact on the question of solvability in [8].
For the tangential case on the next line $(2/\cdot)$ in the table, in contrast with the transversal case, this has no prior counterpart in the theory of solvability. We find that in the neighborhood $\Omega \ni (\xi_1=\xi_2=0)$ we have  $H_{\tau}=H_{\tau-q}$ so the bicharacteristics are just lines in the $t$ direction foliating  $\Omega.$
We also studied this in [1], where we set $\alpha=5/12$, $ \beta=1/12$, and $\gamma=1/2$. This was a questionable choice, because $\frac{\gamma}{\beta}=j=6$ and we only need $j=3$, so this we shift to $\alpha=1-4\beta, \beta=\beta$ and $\gamma=3 \beta$. As you notice $1/2=\gamma>\alpha$ was corrected as otherwise we get remainders $\mathcal{O}_T(h^{1+\gamma})=\mathcal{O}_G(h^{2-\gamma})$ to get $\mathcal{O}_{T+G}$ and this we want to avoid. Also, we can get $\alpha-\gamma=-1/12$ for the values chosen in [1], and this term can appear in the system, and we cannot find factors $(\gamma-\alpha)$ there. The $c/c=2/2$ was demonstrated in [1], but here we have pointed out what this factor of derivation can add to the subprincipal symbol; we then studied its effect. The effect is two-fold: first, the condition shifts to the derivative of the subprincipal symbol when $k>j$; second, when $k=j$, it is possible to factorize, and then the effect from the subprincipal symbol is lost, so we do not get quasimodes.
For the Bibliography, we have mostly used Dencker in [3] and [4]. A great resource has also been Trefethen-Embree [10], and Zworski [12] for the theory of pseudospectrum and for results in the semiclassical field. And of course [6] and [7] are the firm references for all working with linear differential equations. For the notation and terminology, we were inspired by the classic work of Whitehead and Russel, [11].
\addcontentsline{toc}{chapter}{Bibliography}

\end{document}